\newcommand{\N}{\mathcal{N}} \newcommand{\C}{\mathbf{C}}
\newcommand{\G}{\mathbf{G}} 
\newcommand{\Q}{\mathbf{Q}} 
\newcommand{\F}{\mathbf{F}} 
\newcommand{\Z}{\mathbf{Z}}
\newcommand{\M}{\mathcal{M}}
\renewcommand{\th}{\theta}
\newcommand{\et}{\mathrm{\acute{e}t}}
\newcommand{\sep}{\mathrm{sep}}
\newcommand{\perf}{\mathrm{perf}}
\newcommand{\eff}{\mathrm{eff}}
\newcommand{\isomto}{\overset{\sim}{\rightarrow}}
\DeclareMathOperator{\Sym}{Sym} 
\DeclareMathOperator{\Lie}{Lie}
\DeclareMathOperator{\Frob}{Frob}
\DeclareMathOperator{\GL}{GL}
\DeclareMathOperator{\Gal}{Gal}
\DeclareMathOperator{\Hom}{Hom}
\DeclareMathOperator{\Spec}{Spec}
\newcommand{\tM}{{t\mathcal{M}}}
\theoremstyle{plain}
\newtheorem{theorem}{Theorem}
\newtheorem{proposition}{Proposition}
\newtheorem{conjecture}{Conjecture}
\theoremstyle{definition}
\newtheorem{definition}{Definition}
\newtheorem{example}{Example}
\newtheorem{remark}{Remark}
\newtheorem{question}{Question}
\begin{document}

\markboth{Lenny Taelman}
 {Special Values and $t$-Motives: a Conjecture}

\title
 {Special $L$-Values of $t$-Motives: a Conjecture}

\author{Lenny Taelman}

\begin{abstract}
We propose a conjecture on special values of $ L $-functions
in a function field context with positive characteristic coefficients.

For $ M $ a uniformizable $ t $-motive
with everywhere good reduction we conjecture a relation between
the value of the Goss $ L $-function
$ L( M^\vee\!,\, s ) $ at $ s = 0 $ and the uniformization
of the abelian $ t $-module associated with $ M $.

When $ M $ is a power of the Carlitz $ t $-motive
the conjecture specializes to a theorem of Anderson and
Thakur on Carlitz zeta values. Beyond this case we present
numerical evidence.
\end{abstract}

\maketitle

\section{Introduction: Three flavors of special values}

Of the three flavors of special values of $ L $-functions that
are to be discussed now, only the third is logically relevant
to the rest of the paper. The first two are here to provide
some context.

\subsection{Number field base, characteristic zero coefficients}
\label{flavor1}

Let $ K $ be a number field and $ \bar{K} $ an algebraic closure
of $ K $.

\begin{definition}[see \cite{Serre68}]
A \emph{strictly compatible system} of $ \ell $-adic
representations of $ \Gal( \bar{K} / K ) $ is a collection
$ \rho = ( \rho_\ell )_\ell $ of continuous homomorphisms
$ \rho_\ell: \Gal( \bar{K} / K ) \to \GL( V_\ell ) $, one for every
rational prime $ \ell $, where the $ V_\ell $ are finite dimensional
$ \Q_\ell $-vector spaces, and subject to the condition that there exists
a finite set $ S $ of places of $ K $ such that:
\begin{enumerate}

	\item for all places $ v \notin S $ and for all $ \ell $
	coprime with $ v $ the representation $ \rho_\ell $ is
	unramified at $ v $;

	\item for such $ \ell $ and $ v $ the characteristic
	polynomial of $ \rho_\ell( \Frob_v ) $ has rational
	coefficients and does not depend on $ \ell $.

\end{enumerate}
\end{definition}

A natural source of strictly compatible systems is $ \ell $-adic
cohomology: consider the system $ ( V_\ell )_\ell $ where
$ V_\ell := H^i ( X_{ \bar{K}, \et }, \Z_\ell ) \otimes_{\Z} \Q $ are the $ \ell $-adic
cohomology groups of a smooth and projective variety $ X $ over $ K $,
or their Tate twists, and even subquotients of these constructed using
correspondences defined over $ K $. By \cite{Deligne74} these form
strictly compatible systems. Any system of representations that is
isomorphic to such a system is said to \emph{come from geometry}.

To any strictly compatible system $ \rho $ coming from geometry
and any finite set of
places $ S $ as above one associates an $ L $-function as follows.
First define for every finite place $ v $ that is not in $ S $ the polynomial 
\[
	P_v ( X ) := \det \left( 1 - X \rho_\ell ( \Frob_v ) \right)
	\in \Q[ X ]
\]
using any $ \ell $ which is coprime with $ v $. Then define the
$ L $-function of $ \rho $ away from $ S $ by the Euler product
\begin{equation}\label{Ldef}
	L_S ( \rho, s ) := \prod_{ v \notin S }
	P_v ( Nv^{-s} )^{ -1 }
\end{equation}
where the product ranges over all finite places of $ K $ not in $ S $
and where $ Nv \in \Z_{>0} $ denotes the norm of the place $ v $.
By \cite{Deligne74} this converges to a complex analytic
function for $ \Re( s ) $ sufficiently large. 

For any $ \rho $ coming from geometry and $ n \in \Z $ such
that $ L( \rho, s ) $ can be holomorphically continued to a neighborhood
of $ s = n $ we say that the complex number $ L( \rho, n ) $ is a
\emph{special value}. More generally, if $ L( \rho, s ) $ can be
meromorphically continued to a neighborhood of $ s = n $
we also call the leading coefficient of the Laurent series expansion of
$ L( \rho, s ) $ around $ s = n $ a special value.

There is a large zoo of theorems and conjectures concerning these
special values: Euler's $ \zeta( 2 ) = \pi^2 / 6 $, the class number
formula, the Birch and Swinnerton-Dyer conjecture, to name just a few.
A very general conjecture due to Beilinson \cite{Beilinson84}
and reformulated by Scholl \cite{Scholl91}
expresses all special values (up to a rational factor) in terms of
periods of mixed motives. (\emph{see also} the excellent survey
\cite{Kontsevich01}.)

\subsection{Function field base, characteristic zero coefficients}

Of course the above definition of an $ L $-function associated to
a strictly compatible system of $ \ell $-adic representations makes
perfect sense if $ K $ is not a number field but the function field
of a curve over a finite field with $ q $ elements.

Only the relation between special values and periods disappears from
the picture, because if $ \rho $ comes from geometry then there exists
a rational function $ f \in \Q( T ) $ such that
$ L( \rho, s ) = f ( q^{-s} ) $. In particular: if $ s = n $ is not
a pole of $ L( \rho, s ) $ then $ L( \rho, n ) \in \Q $.

(The interpretation of this rational number in terms of arithmetic
geometry and algebraic $ K $-theory is a very interesting problem
\cite{Schneider82} \cite{Milne86}, but it is not
the topic of this note.)

\subsection{Function field base, characteristic $p$ coefficients}

After having discussed the two flavors that we will \emph{not} be
concerned with, we now come to the central topic of this paper.

Let us start with an example of a special value of this third flavor.
Let $ A := \F_q[ t ] $ be the polynomial ring in one variable $ t $
over a finite field $ \F_q $ of $ q $ elements. Write $ A_+ $ for
the set of monic elements of $ A $.
The infinite sum
 \[ \zeta( n ) := \sum_{ f \in A_+ } f^{-n} \]
converges in $ \F_q(( t^{-1} )) $ for every $ n \in \Z_{>0} $.
For example, if $ q = 2 $ then one easily computes by hand
\[
	\zeta( 1 ) \in 1 + t^{-2} + t^{-3} + t^{-4} \F_2[[ t^{-1} ]].
\]
Using unique factorization in $ A $ we obtain
an expression as an infinite convergent Euler product:
 \[ \zeta( n ) = \prod_{ f } ( 1 - f^{-n} )^{-1}, \]
where the product runs over the monic irreducible elements.
These $\zeta( n )$ with $ n > 0 $ are examples of special values
about which our conjecture will say something. In fact, for
these examples the conjecture specializes to a theorem due to
Anderson and Thakur \cite{Anderson90}.

Now we generalize this example and turn to strictly compatible
systems of Galois representations. For every
non-zero prime ideal $ \lambda \subset \F_q[ t ] $ consider the
$ \lambda $-adic completion $ \F_q( t )_\lambda $ of
$ \F_q( t ) $.
Let $ K $ be a finite separable extension of $ \F_q( t ) $.
Let $ \rho = ( \rho_\lambda ) $ be a family of representations
of $ \Gal ( K^\sep / K ) $ on finite dimensional 
$ \F_q( t )_\lambda $-vector spaces, one for each prime ideal
$ \lambda $ of $ \F_q[ t ] $.
We call $ \rho $ a \emph{strictly compatible system} if there exists
a finite set $ S $ of places of $ K $ such that
\begin{enumerate}

	\item for every finite place $ v \notin S $ and for all
	$ \lambda $ not under $ v $ the representation
	$ \rho_\lambda $ is unramified at $ v $;

	\item for these $ \lambda $ and $ v $ the characteristic
	polynomial of Frobenius at $ v $ has coefficients in
	$ \F_q ( t ) $ and is independent of $ \lambda $.

\end{enumerate}

For every finite place $ v $ of $ K $ define $ \N v \in \F_q[ t ] $
to be a monic generator of the norm from $ K $ to $ \F_q( t ) $
of the ideal corresponding to $ v $. 

Now by analogy with (\ref{Ldef}) we define for every finite 
$ v \notin S $
\[
	P_v( X ) := \det( 1 - X \rho_\lambda ( \Frob_v ) )
	\in \F_q( t )[ X ]
\]
using any $ \lambda $ not below $ v $ and
\[
	L_S( \rho, n ) := \prod_{ v \notin S }
	P_v( \N v^{ -n } )^{ -1 },
\]
the product being over the finite places $ v $ that are not in $ S $.
This converges to an element of $ \F_q(( t^{-1} )) $ for all
sufficiently large \emph{integers} $ n $.

For example, if $ K $ is $ \F_q( t ) $, and $ \rho $ the family of trivial
representations then with $ S = \emptyset $ we have
\[
	L( \rho, n ) = \zeta( n ).
\]

In this context, a natural source of strictly compatible systems are 
$t$-motives, and our conjecture will have something to say about
the special value $ L( \rho, n ) $ provided that $ \rho $ comes from
a uniformizable $ t $-motive with everywhere good reduction.
(These notions will be explained in \S \ref{preliminaries}.)

To demand that $ \rho $ comes from a uniformizable $ t $-motive
is very natural, but the condition that it has everywhere good
reduction (which is equivalent with saying that $ S $ can be taken
to consist of only ``infinite'' places of $ K $ ) is an ugly condition
that should eventually be
removed. Unfortunately at present there are almost no examples
with bad reduction where the numerical data allows us to
make reasonable conjectures.

\begin{remark}
We speak about a ``special value'' $ L( \rho, n ) $, but
we have not defined $ L( \rho, s ) $ for any non-integral argument
$ s $. Goss \cite{Goss92} has shown that there is in fact an
analytic function $ L( \rho, s ) $ of which the $ L( \rho, n ) $ are
particular values (the tricky part is defining the domain of such a
function). We will not use this.
\end{remark}

Finally, we should point out that in a recent preprint
of Vincent Lafforgue \cite{Lafforgue08} formulas for
certain classes of special values in terms
of extensions of shtukas have been proven. We hope to discuss the
precise relation between his Theorems and our Conjectures in
a future paper.

\section{Preliminaries}
\label{preliminaries}

\subsection{Base and coefficients, notation}

To produce strictly compatible systems of Galois representations
over function fields it is very useful to separate the
base field from the coefficient rings. So we will look at
representations of $ \Gal( K^\sep / K ) $ with $ K $ a
function field containing $ \F_q $ on vector spaces
over completions $ \F_q( t )_\lambda $ of the a priori unrelated
rational function field $ \F_q( t ) $.

Eventually, to have a meaningful notion of $ L $-functions
we will fix an injective morphism $ \F_q( t ) \to K $, but we
will \emph{not} identify $ \F_q( t ) $ with the image.

(Such separation is impossible in the number field case,
in the same way that trying to adapt Weil's intersection-theoretical
proof of the Riemann Hypothesis for curves $ X $ over finite fields
to $ \Spec( \Z ) $ breaks down in the first step: the construction of
the surface $ X \times X $.)

\subsection{Table of notation}

\begin{itemize}
\item $ \F_q $: a fixed field with $ q $ elements.
\item {\it Base rings:}
\begin{list}{\labelitemi}{\leftmargin=4em}
	\item[$ K_\infty := $] $ \F_q(( \th^{-1} )) $, the field of Laurent
	series in $ \th^{-1} $ over $ \F_q $;
	\item[$ K := $]  a subfield of $ K_\infty $ that has finite degree
	over $ \F_q( \th ) $;
	\item[$ O_K := $]  the integral closure of the polynomial ring
	$ \F_q[ \th ] $ inside $ K $;
	\item[$ \C_\infty := $] the completion of an algebraic closure
	of $ K_\infty $;
	\item[$ K^\sep := $] the separable closure of $ K $ in $ \C_\infty $;
	\item[$ K^\perf := $] the perfection of $ K $.
\end{list}
\item {\it Coefficient rings:}
\begin{list}{\labelitemi}{\leftmargin=4em}
	\item[$ A := $] $ \F_q[ t ] $, polynomial ring in a variable $ t $;
	\item[$ A_\lambda := $] $ \varprojlim_n A / \lambda^n $, the
		$ \lambda $-adic completion of $ A $, where $ \lambda $
		is a non-zero prime ideal of $ A $;
	\item[$ F := $] $ \F_q( t ) $, the fraction field of $ A $;
	\item[$ F_\lambda := $] $ A_\lambda \otimes_A F $;
	\item[$ F_\infty := $] $ \F_q (( 1/t )) $.
\end{list}
\item {\it Relation between base and coefficients:} 
	$ i : \F_q[ t ] \to K $: the $ \F_q $-algebra
	homomorphism that maps $ t $ to $ \th $.
\end{itemize}

(The classical counterpart to this last map 
is the canonical morphism from $ \Z $ to any commutative ring.)

\subsection{$ t $-motives and Galois representations}

Let $ R $ be a commutative ring containing $ \F_q $.

\begin{definition}
A \emph{$ \sigma $-module} of \emph{rank} $ r $
over $ R $ is a pair $ ( M, \sigma ) $ of a projective
$ R \otimes _{\F_q} A $-module $ M $
of rank $ r $ and a map $ \sigma: M \to M $ such that
\begin{enumerate}
	\item $ \sigma $ is $ A $-linear;
	\item $ \sigma ( x m ) = x^q \sigma( m ) $ for all
		$ x \in R $ and $ m \in M $.
\end{enumerate}
\end{definition}

A morphism from $ ( M_1, \sigma_1 ) $ to $ ( M_2, \sigma_2 ) $
is a homomorphism $ f:  M_1 \to M_2 $ of
$ R \otimes_{\F_q} A $-modules such that
$ \sigma_2 \circ f = f \circ \sigma_1 $. 

We will often suppress the $ \sigma $ from the notation and write 
$ M $ for a $ \sigma $-module $ ( M, \sigma ) $.

If $ ( M_1, \sigma_1 ) $ and $ ( M_2, \sigma_2 ) $ are $ \sigma $-modules
then we define their tensor product to be the $ \sigma $-module
$ ( M_1 \otimes_{R\otimes A} M_2, \sigma_1 \otimes \sigma_2 ) $.
Similarly one can define symmetric and exterior powers. In particular,
given a $ \sigma $-module $ M $ one can consider its determinant
$ \det( M ) $ which is a $ \sigma $-module of rank one.

If $ R \to S $ is an $ \F_q $-algebra homomorphism and $ M $ a
$ \sigma $-module over $ R $ then we denote by $ M_S $ the
$ \sigma $-module over $ S $ obtained by extension of scalars:
\[
	M_S = ( M\otimes_R S, m\otimes s \mapsto \sigma(m) \otimes s^q ).
\]

\begin{definition}
A $ \sigma $-module $ ( M, \sigma ) $ over a field $ L $ is said
to be \emph{non-degenerate} if $ \det( \sigma ) : \det( M ) \to \det( M ) $
is non-zero. A $ \sigma $-module $ M $ over $ R $ is said to be
non-degenerate if $ M_L $ is non-degenerate for all $ R $-fields
$ R \to L $.
\end{definition}

Let $ M $ be a $ \sigma $-module over $ K $. For every non-zero
prime ideal $ \lambda $ of $ A $ we have that
\[
	T_\lambda( M ) := \varprojlim_n
	\left( M_{ K^\sep } / \lambda^n M_{ K^\sep } \right)^{\sigma=1}
\]
is naturally an $ A_\lambda $-module with a continuous action of
$ \Gal( K^\sep / K ) $ and
\[
	V_\lambda( M ) :=
	T_\lambda( M ) \otimes_{ A_\lambda } F_\lambda
\]
is naturally an $ F_\lambda $-vector space with a continuous action
of $ \Gal( K^\sep / K ) $. In general these need not be finitely
generated, yet one easily verifies:

\begin{proposition}
	If $ \sigma $ is non-degenerate
	then for all but finitely many $ \lambda $ 
	the dimension of $ V_\lambda( M ) $ equals the
	rank of $ M $. \qed
\end{proposition}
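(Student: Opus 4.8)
The plan is to trivialize $M$, read off a single determinant, and then reduce the statement to a Lang-type descent over the separably closed field $K^\sep$. Since $K\otimes_{\F_q}A=K[t]$ is a principal ideal domain, the projective module $M$ is in fact free of rank $r:=\rk M$. Fixing a basis, $\sigma$ is encoded by an $r\times r$ matrix $\Phi$ over $K[t]$, and I set $\Delta:=\det\Phi\in K[t]$. Because extension of scalars sends $\Phi$ to its image and the determinant commutes with ring maps, the determinant $\det(\sigma)$ on the rank-one module $\det(M)_L$ is nonzero exactly when the image of $\Delta$ in $L[t]$ is nonzero; as $K[t]\hookrightarrow L[t]$ is injective for every extension field $L$, this shows $M$ is non-degenerate precisely when $\Delta\neq0$. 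So I may assume $\Delta\neq 0$.

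Next I would isolate the finite bad set. Writing $\lambda=(\pi)$ with $\pi\in A$ monic irreducible, the matrix $\Phi$ becomes invertible over $K^\sep\otimes_{\F_q}A/\lambda^n=K^\sep[t]/(\pi^n)$ for every $n$ as soon as $\Delta$ is a unit modulo $\pi$, i.e. as soon as $\pi\nmid\Delta$ in $K^\sep[t]$. Any $\pi$ dividing $\Delta$ satisfies $\deg\pi\le\deg_t\Delta$, and there are only finitely many monic irreducibles of bounded degree in $A$; hence $\Phi$ is invertible modulo $\lambda^n$ for all $n$ whenever $\lambda$ avoids a finite set. From now on I fix such a \emph{good} $\lambda$.

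The heart of the argument is then the following descent statement, a form of Lang's theorem with Artinian coefficients: if $L$ is separably closed and $N$ is a finite free module over $L\otimes_{\F_q}(A/\lambda^n)$ carrying a $(\Frob_q\otimes\id)$-semilinear map $\sigma$ whose matrix is invertible, then $N^{\sigma=1}$ is free of rank $\rk N$ over $(L\otimes_{\F_q}A/\lambda^n)^{\sigma=1}=A/\lambda^n$, and the natural map $(L\otimes_{\F_q}A/\lambda^n)\otimes_{A/\lambda^n}N^{\sigma=1}\to N$ is an isomorphism. I would prove this by dévissage along the filtration $\lambda^i N/\lambda^{i+1}N$, reducing to the residue field $A/\lambda$. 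There it is classical Lang: the map $x\mapsto\Phi\,x^{(q)}-x$ (with $x^{(q)}$ the entrywise $q$th power) has derivative $-\id$ in characteristic $p$, so the equation $\sigma(x)=x$ is separable and its solution scheme is finite étale; over the separably closed $L$ this scheme is a disjoint union of copies of $\Spec L$, so all solutions are rational and exhaust the full rank, while the general independence bound gives the opposite inequality. Surjectivity of $\sigma-1$ on each graded piece (again Lang over $L$) makes $(-)^{\sigma=1}$ exact, which powers the dévissage and shows that the transition maps in $n$ are surjective.

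Finally I would assemble the pieces: applying the lemma with $L=K^\sep$ and $N=M_{K^\sep}/\lambda^n M_{K^\sep}$ shows that each $(M_{K^\sep}/\lambda^n)^{\sigma=1}$ is free of rank $r$ over $A/\lambda^n$ with surjective transition maps, so the inverse limit $T_\lambda(M)$ is free of rank $r$ over $A_\lambda$ and $V_\lambda(M)=T_\lambda(M)\otimes_{A_\lambda}F_\lambda$ has dimension $r$. The step I expect to be the main obstacle is the descent lemma, and specifically the fact that $K^\sep$ is separably closed but not perfect: $\sigma$ is not literally bijective, since one cannot extract $q$th roots, so the invariants must be produced by the étale/Lang mechanism rather than by inverting $\sigma$, and this mechanism must be propagated uniformly through all the Artinian truncations $A/\lambda^n$ in order to control the inverse limit defining $T_\lambda(M)$.
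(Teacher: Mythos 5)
Your proof is correct. The paper offers no argument for this proposition (it is stated with ``one easily verifies'' and a closing box), and your reduction to the invertibility of $\det\Phi$ modulo all but finitely many $\pi$, followed by Lang descent over the separably closed (but imperfect) field $K^\sep$ propagated through the Artinian quotients $A/\lambda^n$, is exactly the standard verification the author has in mind.
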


So far we have not used $ i $ which relates the base
and the coefficients. Recall that $ \th = i( t ) $.

\begin{definition}
An \emph{effective $ t $-motive} over $ K $ is a non-degenerate
$ \sigma $-module $ M $ over $ K $
such that $ \det ( M ) $ is isomorphic with the $ \sigma $-module
$ ( K[t]e, e \mapsto \alpha(t-\th)^ne ) $ for some 
$ \alpha \in K^\times $ and $ n \geq 0 $.
\end{definition}

The family of Galois representations associated with
an effective $ t $-motive forms a strictly compatible system:

\begin{proposition}[Thm 3.3 of \cite{Gardeyn01}]\label{strcmp}
Let $ M $ be an effective $ t $-motive over $ K $ of rank $ r $.
Then $ \dim V_\lambda( M ) = r $ for all $ \lambda $. Moreover,
there exists a finite set $ S $ of places of $ K $
such that
\begin{enumerate}
	
	\item for every place $ v \notin S $ and for all
	non-zero prime ideals $ \lambda $ coprime with
	$ i^\ast v $
	the representation $ V_\lambda( M ) $ is unramified
	at $ v $;
	
	\item for these $ \lambda $ and $ v $ the
	characteristic polynomial of Frobenius at $ v $
	has coefficients in $ A $ and is independent of $ \lambda $.
\qed
\end{enumerate}
\end{proposition}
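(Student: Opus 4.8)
The plan is to prove everything through the étale comparison between $\sigma$-modules and Galois representations, combined with a spreading-out argument over a model of $K$. The organizing observation is that the effective $t$-motive condition forces $M$ to be \emph{étale} after completion at every prime $\lambda$ of $A$. Trivializing $M_{K^\sep}$ by a basis and writing $\sigma$ as a matrix $\Phi$ followed by the $q$-power Frobenius on coefficients, non-degeneracy gives $\det\Phi\neq 0$, while the isomorphism $\det(M)\cong(K[t]e,\,e\mapsto\alpha(t-\th)^ne)$ pins down the $t$-divisor of $\det\Phi$ to be $n\cdot[t=\th]$, so that $\det\Phi=\alpha'(t-\th)^n$ for some $\alpha'\in K^\times$. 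Since $\th=i(t)$ is transcendental over $\F_q$, the factor $t-\th$ is coprime in $K^\sep[t]$ to every monic irreducible $p(t)$ generating a $\lambda$; hence $\det\Phi$ is a \emph{unit} modulo $\lambda^m$ for every $\lambda$ and every $m$. This is precisely the input that upgrades the preceding proposition (which yields full rank only for all but finitely many $\lambda$) to a statement valid for all $\lambda$.

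For the dimension count I would invoke the function-field analogue of Lang's theorem. Because $K^\sep[t]$ is a principal ideal domain, $M_{K^\sep}$ is free of rank $r$; by the previous paragraph $\sigma$ is bijective on $M_{K^\sep}/\lambda^m M_{K^\sep}$, so the additive $A/\lambda^m$-linear map $\sigma-1$ on this module is surjective with kernel free of rank $r$ over $A/\lambda^m$, and moreover $K^\sep\otimes_{\F_q}(M_{K^\sep}/\lambda^m)^{\sigma=1}\to M_{K^\sep}/\lambda^m$ is an isomorphism. Surjectivity uses that the étale hypothesis makes the defining Artin--Schreier-type system separable, so the obstruction vanishes over the separably closed field $K^\sep$. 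Passing to the limit over $m$ gives $\rk_{A_\lambda}T_\lambda(M)=r$, hence $\dim_{F_\lambda}V_\lambda(M)=r$, for every $\lambda$.

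To produce $S$ I would spread $M$ out to a locally free $\sigma$-module $\mathcal{M}$ over $\Oc_U\otimes_{\F_q}A$ for a nonempty open $U=\Spec\Oc_U$, deleting a finite set $S$ of places so that $\sigma$ and $\alpha$ extend and $\alpha\in\Oc_U^\times$. For a finite place $v\in U$ with residue field $\kappa_v$, the image $\bar\th\in\kappa_v$ of $\th$ is a root of the prime $p_\mu$ under $v$, where $\mu=i^\ast v$, so the reduction $\det\bar\Phi=\bar\alpha(t-\bar\th)^n$ is a unit modulo $\lambda^m$ exactly when $p_\lambda(\bar\th)\neq 0$, that is, when $\lambda$ is coprime with $i^\ast v$. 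This is the origin of the coprimality hypothesis. For such $v$ and $\lambda$ the $\sigma$-module is étale over the local ring at $v$, and an étale $\sigma$-module over a complete (or henselian) discrete valuation ring produces a representation on which inertia acts trivially, so $V_\lambda(M)$ is unramified at $v$.

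Finally, for the Frobenius polynomial I would identify the geometric Frobenius at $v$ acting on $V_\lambda(M)$ with the action of $\sigma^{\deg v}$ on the fibre $\mathcal{M}_v$ (the opposite convention only replaces $P_v$ by its reciprocal and changes nothing below). As $\sigma$ is $q$-semilinear over $\kappa_v$ and $[\kappa_v:\F_q]=\deg v$, the iterate $\sigma^{\deg v}$ is $\kappa_v[t]$-linear, so it has a characteristic polynomial $P_v(X)\in\kappa_v[t][X]$; over $\kappa_v(t)$ the operator $\sigma$ is bijective (since $\det\bar\Phi\neq 0$) and commutes with $\sigma^{\deg v}$, and conjugating by the $q$-semilinear $\sigma$ shows $P_v$ is fixed by the coefficientwise $q$-power map, forcing $P_v\in\F_q[t][X]=A[X]$. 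Since the same fibre computes the Frobenius for every coprime $\lambda$, the polynomial is independent of $\lambda$. The main obstacle, in my view, is making this last comparison rigorous: establishing the equivalence between étale $\sigma$-modules and continuous $\Gal(K^\sep/K)$-representations compatibly with reduction at $v$, and checking that geometric Frobenius really corresponds to the linearized $\sigma^{\deg v}$ on the special fibre. This is the function-field analogue of the Eichler--Shimura congruence relation and carries essentially all the content; the rank count and the unramifiedness then follow formally.
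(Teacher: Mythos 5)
The paper does not actually prove this proposition: it is imported wholesale from Theorem~3.3 of Gardeyn's paper, with a \verb|\qed| in the statement. So there is no internal argument to compare yours against; what you have written is a reconstruction of the standard proof, and its architecture is sound. The three pillars --- (a) the determinant condition forces $\det\Phi=\alpha'(t-\th)^n$ with $t-\th$ coprime to every $p_\lambda$ because $\th$ is transcendental over $\F_q$, whence the $\sigma$-module is ``\'etale'' at every $\lambda$; (b) a Lang/Artin--Schreier argument over the separably closed field gives the full fixed-point lattice, and spreading out over an open $U\subset\Spec O_K$ gives unramifiedness at the places where the model stays \'etale; (c) the linearized $\sigma^{d(v)}$ on the special fibre computes Frobenius --- are exactly the right ones, and (c) is indeed where all the content sits, as you say.

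Two corrections. First, $\sigma$ is \emph{not} bijective on $M_{K^\sep}/\lambda^m M_{K^\sep}$: the field $K^\sep$ is separably closed but not perfect (no element of $K\subset\F_q((\th^{-1}))$ outside $\F_q((\th^{-q}))$ has a $q$-th root), so $\sigma=\Phi\circ\tau$ is injective but not surjective. This does not sink the argument --- what you actually need is that the equations $\Phi\tau(x)-x=y$ and $\Phi\tau(x)=x$ are separable (Jacobian $=-\mathrm{id}$), hence define finite \'etale covers with the full complement of $K^\sep$-points, which is the reason you correctly give in the next sentence --- but the stated deduction ``bijective, so $\sigma-1$ is surjective'' should be deleted. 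Second, your parenthetical claim that the choice between Frobenius and its inverse ``changes nothing below'' is false precisely for the integrality assertion: the paper's own Carlitz example has $\Frob_v$ acting on $V_\lambda(C)$ as $f(t)^{-1}$, so $\det(1-X\Frob_v)=1-f(t)^{-1}X$ has coefficients in $F$ but not in $A$, whereas $\det(1-X\sigma^{d(v)}\,|\,\M_{k(v)})=1-f(t)X$ lies in $A[X]$. The coefficients-in-$A$ statement is thus tied to the $\sigma^{d(v)}$ (geometric) normalization of Proposition~\ref{eulerfactor}, and a careful proof must fix this convention when making the comparison in (c) rigorous.
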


\begin{example}
Let $ C $ be the \emph{Carlitz} $ t $-motive over $ K $. This is the rank
one effective $ t $-motive given by
\[
	C = ( K[ t ] e, e \mapsto ( t - \th ) e ).
\]
Let $ v $ be a finite place of $ K $ (\emph{i.e.} $ v $ does not lie
above the place $ \th = \infty $ of $ \F_q( \th ) $.) 
Let $ f \in \F_q[ \theta ] $ be
a monic generator of the ideal in $ \F_q[ \theta ] $ corresponding to
the norm of $ v $ in $ \F_q( \theta ) \subset K $. One verifies 
that
\begin{enumerate}
	\item the representation $ V_\lambda( C ) $ is unramified at
		$ v $ for all $ \lambda $ coprime with $ i^\ast v $;
	\item for such $ \lambda $ we have that $ \Frob_v $ acts
		as $ f( t )^{ -1 } \in \F_q( t ) $.
\end{enumerate}
So $ C $ plays the role of the Lefschetz motive $ \Q( -1 ) $.
\end{example}

\subsection{Abelian $ t $-modules}

Denote by $ K[ \tau ] $ the ring whose elements are polynomial 
expressions $ \sum a_i \tau^i $ with $ a_i \in K $ and where
multiplication is defined through
the rule $ \tau a = a^q \tau $ for $ a \in K $. The ring $ K[ \tau ] $
is canonically isomorphic with the endomorphism ring of the
$\F_q$-vector space scheme $ \G_a $ over $ K $.

If $ ( M, \sigma ) $ is an effective $ t $-motive over $ K $ then $ M $ is
naturally a left $ K[ \tau ] $ module through $ \tau m := \sigma(m) $.
Now consider the functor
\[
	E_M :
	\{  K \text{-algebras} \} \to 
	\{ A \text{-modules} \} :
	R \mapsto \Hom_{ K[\tau] }( M, R ),
\]
where $ R $ is a left $ K[ \tau ] $-module through $ \tau r := r^q $.
This functor is representable by an affine $ A $-module scheme.

Conversely, given an $ A $-module scheme $ E $ over $ K $ define 
\[
	M_E := \Hom_{K-\text{gr.sch.}}( E, \G_a ),
\]
which is naturally a left $ A \otimes_{\F_q} K[\tau] $-module.

\begin{theorem}[\S 1 of \cite{Anderson86}, \S 10 of \cite{Stalder07}]
The functors $ M \mapsto E_M $ and $ E \mapsto M_E $ form a pair of
quasi-inverse anti-equivalences between the categories of 
effective $ t $-motives $ M $ over
$ K $ that are finitely generated as left $ K[ \tau ] $-modules
and the category of $ A $-module schemes $ E $ over $ K $ that satisfy
\begin{enumerate}
	\item for some $ d \geq 0 $ the group schemes
	$ E_{K^\perf} $ and $ \G_{a,K^\perf}^d $ are isomorphic;
	\item $ t - \th $ acts nilpotently on $ \Lie( E ) $;
	\item $ M_E $ is finitely generated as a $ K[ t ] $-module. \qed
\end{enumerate}
\end{theorem}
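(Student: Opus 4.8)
The plan is to reduce the statement to the duality between (forms of) $ \G_a^d $ and finitely generated left $ K[\tau] $-modules, and then to bolt on the $ A $-action and match the three conditions. Throughout I write $ \sigma = \tau $, so that an effective $ t $-motive $ M $ is at once a left $ K[\tau] $-module (via $ \tau $) and a $ K[t] $-module (via the $ A $-structure), the two commuting actions together making $ M $ a left $ A\otimes_{\F_q} K[\tau] $-module. The candidate quasi-inverses are the two evaluation transformations: for a $ t $-motive $ M $ the map $ M \to M_{E_M} $ sending $ m $ to the homomorphism $ E_M \to \G_a $, $ \varphi \mapsto \varphi( m ) $; and for an $ A $-module scheme $ E $ the map $ E \to E_{M_E} $ which on $ R $-points sends $ x \in E( R ) $ to the functional $ m \mapsto m( x ) $ on $ M_E = \Hom( E, \G_a ) $, this functional being visibly $ K[\tau] $-linear so that it lies in $ \Hom_{K[\tau]}( M_E, R ) = E_{M_E}( R ) $. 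Both maps are natural and $ A $-linear by construction.

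First I would settle representability and well-definedness. Choosing a presentation $ K[\tau]^n \to K[\tau]^d \to M \to 0 $ exhibits $ E_M $ as the kernel of an additive-polynomial homomorphism $ \G_a^d \to \G_a^n $, hence as an affine $ \F_q $-vector space scheme carrying the induced $ A $-action. To verify the conditions I would compute the Lie algebra: since $ \tau $ acts as zero on $ \Lie( \G_a ) $, one gets $ \Lie( E_M ) = \Hom_{K[\tau]}( M, \Lie( \G_a ) ) = \Hom_K( M/\sigma M, K ) $, the $ K $-linear dual of $ M/\sigma M $ with its residual $ t $-action. The effectivity condition $ \det( M ) \cong ( K[t]e,\ e \mapsto \alpha( t - \theta )^n e ) $ forces $ \coker( \det \sigma ) $, and hence $ M/\sigma M $, to be supported at $ t = \theta $; this is exactly the statement that $ t - \theta $ acts nilpotently on $ \Lie( E_M ) $, matching condition (2). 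Condition (3) matches the finite generation over $ K[t] $ built into the definition of a $ t $-motive, while condition (1) I would obtain from the perfection computation below.

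The core is the double duality, which I would check over the perfect closure and then descend. Over $ K^\perf $ the twisted polynomial ring $ K^\perf[\tau] $ is a principal ideal domain; non-degeneracy of $ \sigma $ makes $ M_{K^\perf} $ torsion-free and hence free, say of rank $ d $, so $ E_{M,K^\perf} \cong \G_{a,K^\perf}^d $, which is condition (1). Dually, when $ E $ satisfies (1) one has $ M_{E,K^\perf} = \Hom( \G_a^d, \G_a ) = K^\perf[\tau]^d $, free of rank $ d $, whence $ M_E $ is a non-degenerate $ \sigma $-module finitely generated over $ K[\tau] $. Over $ K^\perf $ the two evaluation maps reduce to the tautological identifications $ \Hom_{K[\tau]}( K[\tau]^d, R ) = R^d $ and $ \Hom( \G_a^d, \G_a ) = K[\tau]^d $, so they are isomorphisms there. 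As $ K \to K^\perf $ is faithfully flat and both sides are finitely generated, I would descend to conclude that $ M \to M_{E_M} $ and $ E \to E_{M_E} $ are isomorphisms over $ K $.

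The hard part is precisely this passage between $ K $ and $ K^\perf $: because $ K^\perf/K $ is purely inseparable, ordinary Galois descent is unavailable, and one must argue directly that $ E_M $ is a smooth connected vector group over $ K $ (not merely over $ K^\perf $) and that the various finiteness hypotheses — projectivity over $ K[t] $, finite generation over $ K[\tau] $, and freeness over $ K^\perf[\tau] $ — are mutually consistent and descend. The second delicate point is the bookkeeping identifying $ \coker( \sigma ) $ with $ \Lie( E )^\vee $ compatibly with the $ t $-action, so that the determinant condition on $ M $ and the nilpotence condition on $ \Lie( E ) $ correspond exactly. Once these two points are secured, naturality and the remaining verifications are formal.
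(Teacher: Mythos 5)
The paper itself does not prove this theorem: it is stated with a QED box and quoted from \S 1 of Anderson's paper (which treats a perfect base field) together with \S 10 of Stalder (which supplies the descent to non-perfect $K$). Your outline follows exactly the strategy of those references --- duality between finitely generated left $K[\tau]$-modules and $\F_q$-vector space schemes, the two evaluation maps as unit and counit, the identification of $\Lie(E_M)$ with the $K$-dual of $M/\sigma M$, and reduction to $K^{\perf}$ where $K^{\perf}[\tau]$ is a principal ideal domain --- so as a roadmap it is the right one.

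As a proof, however, it stops short of the actual content, precisely at the points you yourself flag. First, the passage between $K$ and $K^{\perf}$ is not a routine faithfully-flat check: beyond verifying that the evaluation maps become isomorphisms after base change (which does descend, granted that both constructions commute with $-\otimes_K K^{\perf}$), one must show that the \emph{essential images} match over $K$ --- e.g.\ that for $E$ satisfying (i)--(iii) the module $M_E$ is projective over $K[t]$ with determinant literally of the form $\alpha(t-\theta)^n e$, and conversely that $E_M$ is a smooth connected vector group over $K$ itself, not merely over $K^{\perf}$. That is the entire point of the Stalder reference and cannot be absorbed into ``formal once secured.'' Second, two intermediate assertions are misattributed or left unproved: freeness of $M_{K^{\perf}}$ over $K^{\perf}[\tau]$ does not follow from non-degeneracy of $\sigma$ alone (which only gives injectivity of the linearized $\sigma$); the correct argument is that the $\tau$-torsion submodule is a finite-dimensional, $t$-stable subspace, hence $K[t]$-torsion, hence zero inside the $K[t]$-projective module $M$. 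Likewise the step from the determinant condition to nilpotence of $t-\theta$ on $\coker(\sigma)$ (and its converse, needed for $M_E$) requires replacing the $\tau$-semilinear $\sigma$ by its linearization $M\otimes_{K,x\mapsto x^q}K\to M$ before any Fitting-ideal or cofactor argument applies. None of this is wrong in spirit, but in its present form the proposal is a plan whose hard steps are exactly the ones deferred to the cited sources.
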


\begin{definition}
An $ \F_q[t] $-module scheme $ E $ satisfying the above three conditions
is called an \emph{abelian $ t $-module} of dimension $ d $. An 
abelian $ t $-module of dimension one is called a \emph{Drinfeld module}.
\end{definition}

\begin{question}
Is the underlying group scheme of an abelian $ t $-module
isomorphic to $ \G_{a}^d $ over $ K $?
\end{question}

For Drinfeld modules this is indeed the case, since the
only form of $ \G_{a} $ that has infinite endomorphism ring is 
$ \G_a $ itself (\emph{see} \cite{Russel70}, \emph{see also}
\S 10 of \cite{Stalder07}).

The tangent space at the identity of $ E $ can be expressed
in terms of $ M_E $ as follows:
\begin{proposition}[\emph{see} \cite{Anderson86}]
$ \Lie_E( K ) = \Hom_K( M_E/K\sigma(M_E), K ) $.
\end{proposition}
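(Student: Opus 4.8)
The plan is to deduce the formula directly from the functorial description of $ E $, bypassing any explicit trivialization of the underlying group scheme. By the anti-equivalence above we may assume $ E = E_{M_E} $, so that for every $ K $-algebra $ R $ we have $ E(R) = \Hom_{K[\tau]}(M_E, R) $, where $ R $ carries the left $ K[\tau] $-module structure $ \tau r = r^q $; recall also that $ \tau $ and $ \sigma $ agree on $ M_E $, so that $ \sigma(M_E) = \tau M_E $. I would compute the tangent space as the kernel of the reduction map $ E(K[\epsilon]) \to E(K) $, where $ K[\epsilon] $ denotes the dual numbers, $ \epsilon^2 = 0 $.

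The key point is the left $ K[\tau] $-module structure on $ K[\epsilon] $. For $ a, b \in K $ one has $ \tau(a + b\epsilon) = (a + b\epsilon)^q = a^q $, the cross term dropping out because $ q $ is divisible by the characteristic $ p $. Hence $ \tau $ acts on $ K[\epsilon] = K \oplus K\epsilon $ by the $ q $-power Frobenius on $ K $ and by \emph{zero} on the tangent line $ K\epsilon $. This vanishing of $ \tau $ on the $ \epsilon $-part is the crucial input, and I expect it to be the only genuine subtlety: it is precisely what forces the homomorphisms representing tangent vectors to annihilate the image of $ \sigma $.

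Granting this, I would identify $ \Lie_E(K) $ with the set of $ K[\tau] $-linear maps $ f : M_E \to K[\epsilon] $ reducing to $ 0 $ modulo $ \epsilon $, i.e. those landing in $ K\epsilon $. For such $ f $ the $ K $-linearity gives $ f(cm) = cf(m) $ for $ c \in K $, while $ K[\tau] $-linearity together with $ \tau(K\epsilon) = 0 $ gives $ f(\tau m) = \tau f(m) = 0 $; thus $ f $ kills $ \sigma(M_E) = \tau M_E $, and being $ K $-linear it kills the whole $ K $-span $ K\sigma(M_E) $. Conversely, any $ K $-linear map $ \bar f : M_E/K\sigma(M_E) \to K $ lifts to $ f : M_E \to K\epsilon $, $ m \mapsto \bar f(m)\epsilon $, and this $ f $ is automatically $ K[\tau] $-linear because $ \tau m \in \sigma(M_E) $ is sent to $ 0 $ on both sides. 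Under the identification $ K\epsilon \isomto K $ these two constructions are mutually inverse and $ K $-linear, yielding $ \Lie_E(K) \isomto \Hom_K(M_E/K\sigma(M_E), K) $.

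I would close by noting that the same isomorphism arises dually from the differential map $ M_E \to \omega_E $, $ \phi \mapsto d\phi $, into the cotangent space at the identity: it is $ K $-linear and surjective and annihilates $ \sigma(\phi) = \tau \circ \phi $ since the Frobenius $ \tau $ on $ \G_a $ has zero differential, so it factors through an isomorphism $ M_E/K\sigma(M_E) \isomto \omega_E $ whose dual is the claimed formula. I would nonetheless present the dual-numbers argument, since it requires nothing beyond the functorial description already at hand, whereas the cotangent-space route would force me to compute $ \omega_E $ for a possibly nontrivial form of $ \G_a^d $, trivializing over $ K^\perf $ and descending.
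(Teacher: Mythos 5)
Your argument is correct. Note first that the paper itself offers no proof of this proposition --- it is stated with a bare reference to Anderson's \emph{$t$-motives} paper --- so there is no internal argument to compare against; your dual-numbers computation is a legitimate self-contained substitute. The chain of identifications is sound: $E$ is representable, so $\Lie_E(K)=\ker\bigl(E(K[\epsilon])\to E(K)\bigr)$ with $E(R)=\Hom_{K[\tau]}(M_E,R)$; the observation that $(a+b\epsilon)^q=a^q$ (all intermediate binomial coefficients vanish in characteristic $p$ and $\epsilon^q=0$) correctly shows that $\tau$ kills $K\epsilon$, which is exactly what forces a tangent vector to annihilate $\tau M_E=\sigma(M_E)$ and hence, by $K$-linearity, all of $K\sigma(M_E)$; and the converse lifting $m\mapsto \bar f(m)\epsilon$ is visibly $K[\tau]$-linear. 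Two small points worth making explicit if you write this up: (a) the identification is not just $K$-linear but $A$-linear, with $a\in A$ acting on $\Hom_K(M_E/K\sigma(M_E),K)$ by precomposition --- this is automatic from the functoriality of your construction but is needed later when the paper forms $W_E=\Lie_E/(t-\theta)\Lie_E$; (b) your closing remark about the cotangent route is essentially Anderson's original argument (the differential of a morphism $E\to\G_a$ kills $\tau\circ\phi$ because Frobenius has zero differential), so the two proofs are dual to one another rather than genuinely different; your version has the advantage of never needing to trivialize the underlying group scheme, as you note.
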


Also the Galois representations associated with $ M_E $
can be expressed in terms of $ E $. If $ \lambda = ( f ) \subset A $
a non-zero prime ideal then define the $ \lambda $-adic
Tate module of $ E $ to be
\[
	V_\lambda( E ) :=
	( \varprojlim_n E[ f^n ]( K^\sep ) )
	\otimes_{ A_\lambda } F_\lambda.
\]
If $ M $ is the effective $ t $-motive associated with $ E $ then we have
\begin{proposition}
$ V_\lambda( M_E ) \cong \Hom ( V_\lambda( E ), F_\lambda ) $. \qed
\end{proposition}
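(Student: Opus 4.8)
The plan is to realize the isomorphism as the dual of the tautological evaluation pairing $(\phi,m)\mapsto\phi(m)$ that underlies the functor $E_M$, after passing to torsion and to the limit over powers of $\lambda$.

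Write $M:=M_E$; by the anti-equivalence $E=E_M$, so $E(R)=\Hom_{K[\tau]}(M,R)$ for every $K$-algebra $R$, and the $A$-action is $(a\phi)(m)=\phi(am)$, which is $K[\tau]$-linear because $a\in A$ acts $K[t]$-linearly and hence commutes with $\sigma=\tau$. I would first record the finite-level description
\[ E[f^n](K^\sep)=\Hom_{K[\tau]}(M/\lambda^n M,K^\sep), \]
an isomorphism of $A/\lambda^n$-modules respecting the $\Gal(K^\sep/K)$-action: a homomorphism is killed by $f^n$ precisely when it factors through $M/f^nM=M/\lambda^n M$.

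Next I would build the pairing from evaluation. Extending $(\phi,m)\mapsto\phi(m)$ to be $K^\sep$-linear in the second variable gives a map $E(K^\sep)\times M_{K^\sep}\to K^\sep$ satisfying $\langle\phi,\sigma x\rangle=\langle\phi,x\rangle^q$ and $\langle a\phi,x\rangle=\langle\phi,ax\rangle$. For $\phi\in E[f^n](K^\sep)$ the pairing factors through $M_{K^\sep}/\lambda^n M_{K^\sep}$, and if in addition $x$ is $\sigma$-invariant then $\langle\phi,x\rangle^q=\langle\phi,x\rangle$, so the value lies in $\F_q$. This produces, for each $n$, a $\Gal(K^\sep/K)$-equivariant pairing
\[ E[f^n](K^\sep)\times\bigl(M_{K^\sep}/\lambda^n M_{K^\sep}\bigr)^{\sigma=1}\to\F_q \]
compatible with the $A/\lambda^n$-actions. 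The crux is that it is perfect. This holds once the $\sigma$-module $M/\lambda^n M$ is \'etale (i.e.\ $\sigma$ acts bijectively): over the separably closed field $K^\sep$ a Lang--Steinberg/Artin--Schreier argument then makes both sides free $A/\lambda^n$-modules of rank $r=\rk M$ (matching Proposition \ref{strcmp}) in perfect duality. \'Etaleness follows from non-degeneracy: the multiplier of $\det(\sigma)$ is $\alpha(t-\th)^m$ with $\alpha\in K^\times$ for some $m\ge0$, and $t-\th$ is a unit in $K[t]/\lambda^n$ because $i$ is injective, so $f(\th)=i(f)\neq0$ and hence $f(t)$ is coprime to $t-\th$.

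Finally I would pass to the limit. Each $A/\lambda^n$ is a Gorenstein local ring, so a compatible system of residue functionals $A/\lambda^n\to\F_q$ turns the perfect $\F_q$-pairings into perfect $A/\lambda^n$-bilinear ones, identifying $E[f^n](K^\sep)$ with $\Hom_{A/\lambda^n}\bigl((M_{K^\sep}/\lambda^n M_{K^\sep})^{\sigma=1},A/\lambda^n\bigr)$. Taking $\varprojlim_n$ --- the transition maps being multiplication by $f$ on the $E$-side and reduction on the $M$-side, which correspond under the pairing --- and then applying $\otimes_{A_\lambda}F_\lambda$ yields a Galois-equivariant isomorphism $V_\lambda(E)\isomto\Hom_{F_\lambda}(V_\lambda(M),F_\lambda)$; dualizing over the field $F_\lambda$ gives $V_\lambda(M_E)\cong\Hom(V_\lambda(E),F_\lambda)$. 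The main obstacle is the perfectness/\'etaleness input in the middle step; the limit bookkeeping and the passage from $\F_q$- to $A_\lambda$-coefficients are routine but must be arranged compatibly, for which the canonical residue pairing on $F_\lambda$ is the convenient device.
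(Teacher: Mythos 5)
The paper itself gives no argument for this proposition --- it is stated with an immediate \verb|\qed| as a standard consequence of Anderson's anti-equivalence --- so there is no in-text proof to compare against; what you have written is a correct and complete version of the standard duality argument that the paper is implicitly invoking. Your chain of reductions is sound: the identification $E[f^n](K^\sep)=\Hom_{K[\tau]}(M/\lambda^nM,K^\sep)$ is exactly right, the evaluation pairing does land in $\F_q$ on $\sigma$-invariants for the reason you give, and the crucial \'etaleness input is handled correctly --- in particular you correctly observe that it holds for \emph{every} $\lambda$, since $f(\th)\neq 0$ for any non-zero $f\in\F_q[t]$ ($\th$ being transcendental over $\F_q$), which is consistent with Proposition~\ref{strcmp}'s assertion that $\dim V_\lambda(M)=r$ with no excluded primes. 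The Gorenstein/residue device for upgrading the $\F_q$-valued perfect pairings to $A/\lambda^n$-valued ones, and the matching of transition maps (multiplication by $f$ against reduction), are the right bookkeeping; after tensoring with $F_\lambda$ the final dualization over a field is harmless. The only stylistic remark is that one could shortcut the Gorenstein step by dualizing only after passing to $V_\lambda$, where everything is a finite-dimensional $F_\lambda$-vector space, but your more careful integral formulation actually proves the stronger statement $T_\lambda(M_E)\cong\Hom_{A_\lambda}(T_\lambda(E),A_\lambda)$, which is a bonus rather than a defect.
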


\subsection{Uniformization}

\begin{proposition}[\emph{see} \S 2 of \cite{Anderson86}]
Let $ E $ be an abelian $ t $-module over $ K $.
\begin{enumerate}
\item There exists a unique entire $ A $-module homomorphism
$ \exp_E : \Lie_E( \C_\infty ) \to E( \C_\infty ) $ that
is tangent to the identity;
\item The kernel of $ \exp_E $ is a finitely
generated free discrete sub-$ A $-module in $ \Lie_E( \C_\infty ) $.
\end{enumerate}
\end{proposition}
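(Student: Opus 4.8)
The plan is to work over $\C_\infty$, where the group scheme underlying $E$ trivializes. Since $\C_\infty$ is algebraically closed and contains $K^\perf$, condition (i) in the definition of an abelian $t$-module gives an isomorphism $E_{\C_\infty}\cong\G_{a,\C_\infty}^d$, under which the $A$-action becomes a ring homomorphism $\phi\colon A\to\mathrm{Mat}_d(\C_\infty[\tau])$, with $\tau$ the $q$-power Frobenius. Write $\phi_t=\sum_{i=0}^s A_i\tau^i$ with $A_i\in\mathrm{Mat}_d(\C_\infty)$; then $A$ acts on $E(\C_\infty)=\C_\infty^d$ through the additive polynomials $\phi_a$ and on $\Lie_E(\C_\infty)=\C_\infty^d$ through the derivatives $d\phi_a$, with $d\phi_t=A_0$. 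Condition (ii) says $N:=A_0-\th I$ is nilpotent. Because $\exp_E$ is required to be $\F_q$-linear and tangent to the identity, I look for it as an $\F_q$-linear series $\exp_E(x)=\sum_{i\ge0}e_i\,x^{(i)}$ with $e_0=I$, where $x^{(i)}:=(x_1^{q^i},\dots,x_d^{q^i})$ and $e_i\in\mathrm{Mat}_d(\C_\infty)$. As $A=\F_q[t]$ is generated over $\F_q$ by $t$, requiring that $\exp_E$ be an $A$-module homomorphism is equivalent to the single functional equation $\exp_E(A_0x)=\phi_t(\exp_E(x))$.

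First I would settle existence and uniqueness of $\exp_E$ as a formal series. Comparing coefficients of $x^{(k)}$ in the functional equation yields, for each $k\ge1$, the relation
\[
	e_kA_0^{(k)}-A_0e_k=\sum_{j=1}^{\min(k,s)}A_j\,e_{k-j}^{(j)},
\]
where $(\cdot)^{(j)}$ is the entrywise $q^j$-power. The right-hand side involves only $e_0,\dots,e_{k-1}$, so the $e_k$ are determined recursively provided the linear operator $L_k\colon X\mapsto XA_0^{(k)}-A_0X$ on $\mathrm{Mat}_d(\C_\infty)$ is invertible for $k\ge1$. Writing $A_0=\th I+N$ and $A_0^{(k)}=\th^{q^k}I+N^{(k)}$ gives $L_k=(\th^{q^k}-\th)\,\mathrm{id}+P_k$, where $P_k\colon X\mapsto XN^{(k)}-NX$ is nilpotent; since $\th$ is transcendental over $\F_q$ we have $\th^{q^k}\ne\th$, so $L_k$ is invertible. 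The recursion therefore has a unique solution with $e_0=I$, which establishes uniqueness in (i).

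The technical heart of the argument, and the step I expect to be the main obstacle, is showing that $\exp_E$ is entire, i.e. that $|e_k|^{1/q^k}\to0$ in $\C_\infty$. From $e_k=L_k^{-1}\bigl(\sum_jA_j\,e_{k-j}^{(j)}\bigr)$ and $|\th^{q^k}-\th|=|\th|^{q^k}$ one gets a gain of size $|\th|^{-q^k}$ from $L_k^{-1}$, which must be shown to dominate the growth caused by the Frobenius twists $e_{k-j}^{(j)}$. The subtlety is that the operator norm of $L_k^{-1}$ also involves $P_k$, whose norm can grow like $|N|^{q^k}$. To neutralize this I would first conjugate $E_{\C_\infty}\cong\G_{a,\C_\infty}^d$ by a diagonal matrix $\mathrm{diag}(1,s,s^2,\dots)$ with $|s|$ small: this is an entire automorphism with entire inverse, so it preserves entireness and all properties of the kernel, and it shrinks the superdiagonal of $N$ so that one may assume $|N|<|\th|$. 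Then the operator norm of $L_k^{-1}$ is at most $c\,|\th|^{-q^k}$ uniformly, and setting $v_k:=\log|e_k|/q^k$ the recursion yields $v_k\le\max_{1\le j\le s}v_{k-j}-\log|\th|+o(1)$; since $\log|\th|>0$ this forces $v_k\to-\infty$, proving entireness and completing (i).

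For (ii), discreteness of $\Lambda:=\ker\exp_E$ follows because $d\exp_E=\mathrm{id}$ makes $\exp_E$ a local analytic isomorphism at $0$, so $0$ is isolated in $\Lambda$, and $\F_q$-linearity then makes every point isolated. That $\Lambda$ is an $A$-submodule of $\Lie_E(\C_\infty)$ is immediate from the functional equation. To bound its rank, fix a prime $\lambda=(f)$; using $A$-linearity, $\phi_{f^n}(\exp_E(f^{-n}\mu))=\exp_E(\mu)$, so $\exp_E$ carries $f^{-n}\Lambda$ into $E[f^n](\C_\infty)$ and induces an injection $f^{-n}\Lambda/\Lambda\hookrightarrow E[f^n](\C_\infty)$. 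Since $\th$ is transcendental the isogeny $\phi_f$ is separable, so $E[f^n](\C_\infty)=T_\lambda(E)/f^nT_\lambda(E)$ has order $q^{nr\deg f}$, where $T_\lambda(E)$ is free of rank $r$ by Proposition \ref{strcmp}; comparing orders for a free rank-$m$ submodule of $\Lambda$ gives $q^{nm\deg f}\le q^{nr\deg f}$, whence $\Lambda$ has finite $A$-rank at most $r$. Finally, $\Lambda$ is discrete of finite rank in a finite-dimensional $\C_\infty$-space, and $A$ embeds via $t\mapsto\th$ as a cocompact discrete subgroup of $F_\infty$; the standard lattice argument---$\Lambda$ modulo a full-rank free submodule is a discrete subgroup of a compact quotient, hence finite---shows $\Lambda$ is finitely generated. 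As $\Lie_E(\C_\infty)$ is torsion-free over $A$ (no nonzero $P\in\F_q[t]$ annihilates a vector, since $P(A_0)$ is invertible when $P\ne0$), the module $\Lambda$ is a finitely generated torsion-free module over the PID $A$, hence free, which completes (ii).
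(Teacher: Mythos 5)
The paper does not actually prove this proposition --- it is quoted with a pointer to \S 2 of \cite{Anderson86} --- so what you have written is, in effect, a reconstruction of Anderson's original argument. Part (i) of your reconstruction is sound: the recursion $e_kA_0^{(k)}-A_0e_k=\sum_j A_je_{k-j}^{(j)}$ is the right one, the invertibility of $L_k$ via $\th^{q^k}\ne\th$ plus nilpotence of $P_k$ is correct, and the rescaling by $\mathrm{diag}(1,s,\dots)$ to tame the nilpotent part before running the norm estimate is a legitimate (and standard) way to get $|e_k|^{1/q^k}\to0$.

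The gap is in the last step of (ii). The phrase ``$\Lambda$ modulo a full-rank free submodule is a discrete subgroup of a compact quotient'' is false if the ambient space is $\Lie_E(\C_\infty)\cong\C_\infty^d$: since $\C_\infty$ is not locally compact, $\C_\infty^d/\Lambda_0$ is not compact, and discreteness alone does \emph{not} force finite generation. (Concretely, if $s_1,s_2,\dots$ are units of $\C_\infty$ whose residues are $\F_q$-linearly independent, then $\sum_i A s_i$ is a discrete, infinitely generated $A$-submodule of $\C_\infty$ for the action $t\mapsto\th$; it has infinite rank, so it does not contradict the proposition, but it kills any argument that uses only discreteness.) The correct route --- and the actual content of the corresponding lemmas in \cite{Anderson86} --- is to show that $\Lambda$ lies in the $F_\infty$-span of $\Lambda_0=\bigoplus A\mu_i$ (true, since $\Lambda/\Lambda_0$ is torsion), that the $\mu_i$ remain linearly independent over $F_\infty$ so that this span is a \emph{finite-dimensional, hence locally compact,} $F_\infty$-subspace isomorphic to $F_\infty^m$ in which $\Lambda_0$ is cocompact, and only then to run the ``discrete in compact'' argument. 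The $F_\infty$-independence is not automatic from $F$-independence and needs its own (short) argument. A second, smaller repair: in the rank bound, $f^{-n}\Lambda_0/\Lambda_0$ does not inject into $f^{-n}\Lambda/\Lambda$ (the kernel is $(f^{-n}\Lambda_0\cap\Lambda)/\Lambda_0$, which need not vanish), so the order comparison $q^{nm\deg f}\le q^{nr\deg f}$ is not literally justified as written; the standard fix is to pass to the $\lambda$-divisible module $\varinjlim_n(f^{-n}A/A)^m\to E[f^\infty](\C_\infty)$ and compare coranks, using discreteness of $\Lambda$ to rule out a divisible kernel. Both repairs are routine once identified, but they are exactly where the non-archimedean, non-locally-compact setting differs from the classical lattice picture.
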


When $ \exp_E $ is surjective this yields an analytic description
of the $ A $-module $ E( \C_\infty ) $ as the quotient of 
$ \Lie_E( \C_\infty ) $ by a discrete submodule. 

Denote by $ M $ the $ t $-motive associated with $ E $.
The following theorem characterizes the $ E $ such that $ \exp_E $
is surjective:

\begin{theorem}\label{unifchar}
The following are equivalent:
\begin{enumerate}
	\item $ \exp_E $ is surjective;
	\item the rank of $ \ker \exp_E $ equals the rank of $ M $;
	\item for all $ \lambda $ the restriction of the
	Galois representation
	$ \rho_\lambda: \Gal( K^\sep / K ) \to \GL( V_\lambda( M ) ) $
	to
	$ \Gal( K_\infty^\sep / K_\infty ) $ has finite image.
\end{enumerate}
\end{theorem}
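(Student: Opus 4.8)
The plan is to treat all three conditions as statements at the single place $\infty$: the map $\exp_E$ is defined analytically over $\C_\infty$, which is built from $K_\infty$, and (iii) concerns only $\Gal(K_\infty^\sep/K_\infty)$. Writing $r:=\rk M$ and $\Lambda:=\ker\exp_E$, the whole argument turns on comparing three $A$-modules of expected rank $r$: the Tate modules $V_\lambda(E)$, the lattice $\Lambda$, and $M$ itself. Because $t-\th$ acts nilpotently on $\Lie_E(\C_\infty)$, every $f\in A$ coprime with $t-\th$ acts invertibly there, so $\tfrac1{f^n}\Lambda\subset\Lie_E(\C_\infty)$ makes sense and
\[
\Lambda/f^n\Lambda\;\longrightarrow\;E[f^n](\C_\infty),\qquad \omega\longmapsto\exp_E(\tfrac1{f^n}\omega),
\]
is a well-defined injection of $A$-modules for every $\lambda=(f)$ coprime with $t-\th$. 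Passing to the limit yields an injection $\Lambda\otimes_A A_\lambda\hookrightarrow T_\lambda(E)$, hence $\Lambda\otimes_A F_\lambda\hookrightarrow V_\lambda(E)$; since $\dim_{F_\lambda}V_\lambda(E)=\dim_{F_\lambda}V_\lambda(M)=r$, this already gives the a priori bound $\rk\Lambda\le r$. The equivalences then amount to understanding when this injection is an isomorphism.

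For (i) $\Leftrightarrow$ (ii) I would argue through the displayed map. If $\exp_E$ is surjective then $E(\C_\infty)=\Lie_E(\C_\infty)/\Lambda$, whence $E[f^n]=(\tfrac1{f^n}\Lambda)/\Lambda$ and the displayed map is onto; thus $\rk\Lambda=\dim V_\lambda(E)=r$, giving (ii). Conversely, if $\rk\Lambda=r$ then for each admissible $\lambda$ the source and target of the displayed map are finite of the same cardinality, so the map is onto and $\im\exp_E\supseteq E[f^n](\C_\infty)$ for all such $\lambda$ and $n$. Since $\exp_E$ is tangent to the identity it is a local isomorphism at $0$, so $\im\exp_E$ is an open, hence closed, $A$-submodule of $E(\C_\infty)$; a density/openness argument (which I would take from \cite{Anderson86}) then upgrades this to surjectivity. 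This last analytic step is exactly Anderson's uniformization input.

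It remains to relate (iii) to uniformizability, which is the substantive part of the theorem and is local at $\infty$. In the uniformizable case the injections above are isomorphisms $\Lambda\otimes_A F_\lambda\isomto V_\lambda(E)$ for every $\lambda$, so $\Gal(K_\infty^\sep/K_\infty)$ acts, compatibly in $\lambda$, on the single $A$-lattice $\Lambda$. Geometrically this says that $M_{K_\infty}$ is rigid-analytically trivial: it acquires a full system of $\sigma$-invariant analytic sections over $\C_\infty$, namely the coordinates of a basis of $\Lambda$. The force of (iii) is that the monodromy of this trivialization — the action of $\Gal(K_\infty^\sep/K_\infty)$ permuting these sections — is finite. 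I would derive both the existence of the trivialization and the finiteness of its monodromy from the local uniformization theory of \cite{Anderson86} and \cite{Gardeyn01}, which characterizes the uniformizable $t$-motives over $K_\infty$ as precisely those whose local Galois representation has finite image; dualizing $V_\lambda(E)$ to $V_\lambda(M)$ gives (iii) as stated. Conversely, from (iii) one trivializes the representation over a finite extension of $K_\infty$, where $M$ becomes analytically trivial, and reads off a full-rank lattice $\Lambda$, hence (ii).

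The main obstacle is this last equivalence. The elementary comparison of the first paragraph produces only a lattice of rank at most $r$ together with $\lambda$-adic representations; it gives no reason for the monodromy at $\infty$ to be finite, nor for the analytic sections to descend to a finite extension of $K_\infty$. Establishing this — the difference-equation analog of the statement that an analytically trivial object has finite monodromy — is the genuine rigid-analytic input, and it is where the real difficulty of the theorem resides; I would import it from \cite{Gardeyn01} rather than reprove it.
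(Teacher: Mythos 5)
Your proposal ultimately rests on exactly the two external inputs that the paper itself uses: Theorem 4 of \cite{Anderson86} for (i) $\Leftrightarrow$ (ii) and Theorem 5.12 of \cite{Gardeyn01} for (i) $\Leftrightarrow$ (iii); the paper's own proof is nothing more than these two citations, so at the level of logical structure your argument matches it. The extra material you supply around the first equivalence --- the well-defined injection $\Lambda/f^n\Lambda \hookrightarrow E[f^n](\C_\infty)$, the resulting bound $\rk\Lambda \le r$, and the counting argument for (i) $\Rightarrow$ (ii) --- is correct and more explicit than the paper. (A small point: ``$f$ coprime with $t-\th$'' is vacuous here, since $t-\th$ is not an element of $A=\F_q[t]$ and every nonzero $f$ acts invertibly on $\Lie_E(\C_\infty)$ because $f(\th)\neq 0$, $\th$ being transcendental over $\F_q$.)

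There is, however, a genuine flaw in the mechanism you describe for (ii) $\Rightarrow$ (i). You argue that $\im\exp_E$ is an open, hence closed, $A$-submodule of $E(\C_\infty)$ containing all torsion, and that a ``density/openness argument'' then yields surjectivity. This cannot work, because the torsion is not dense in $E(\C_\infty)$. The torsion points in the image are exactly $\exp_E$ of the division module $F\otimes_A\Lambda\subset\Lie_E(\C_\infty)$, and since $t$ acts on $\Lie_E$ as $\th$ plus a nilpotent, the closure of $F\otimes_A\Lambda$ is only the $K_\infty$-span of $\Lambda$ --- a proper closed subspace of $\Lie_E(\C_\infty)$, as $K_\infty$ is complete and $\C_\infty$ is infinite-dimensional over it. Already for the Carlitz module the closure of the torsion in $E(\C_\infty)=\C_\infty$ is $\exp_C(K_\infty\pi)$ for $\pi$ a period, not all of $\C_\infty$; and an open subgroup of $\C_\infty^d$ containing a non-dense subgroup can perfectly well be proper (e.g.\ $K_\infty+\{x:|x|\le\varepsilon\}$ inside $\C_\infty$). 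Anderson's actual proof of this implication is of a different nature: full rank of the period lattice is shown to be equivalent to rigid analytic triviality of $M$ over the Tate algebra, and surjectivity of $\exp_E$ is extracted from the analytic trivialization, not from a topological density argument. Since you defer the step to \cite{Anderson86} in any case, the overall chain of dependencies is sound, but the sketch you give of what that citation provides would not survive being written out.
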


When these equivalent statements hold we say that $ E $ (or $ M $)
is \emph{uniformizable}.

\begin{proof}
The equivalence of (i) and (ii) is part of Theorem 4 of \cite{Anderson86},
the equivalence of (iii) and (i) is part of Theorem 5.12 of \cite{Gardeyn01}. 
\end{proof}

Examples of uniformizable effective $t$-motives are provided by the
following:

\begin{proposition}
\begin{enumerate}
\item Drinfeld modules are uniformizable;
\item The tensor product of two uniformizable
effective $t$-motives is uniformizable;
\item Subquotients of uniformizable effective $t$-motives are uniformizable.
\end{enumerate}
\end{proposition}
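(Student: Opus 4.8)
My plan is to treat (i) by the analytic theory of Drinfeld modules and to derive (ii) and (iii) from criterion (iii) of Theorem \ref{unifchar}, which is visibly stable under the operations in question. For (i), note that a Drinfeld module $ E $ is an abelian $ t $-module of dimension one, so $ \Lie_E( \C_\infty ) = \C_\infty $ and $ \exp_E \colon \C_\infty \to \C_\infty $ is an $ \F_q $-linear entire function tangent to the identity. I would invoke the classical fact that for a Drinfeld module of rank $ r $ the kernel of $ \exp_E $ is a free discrete $ A $-submodule of $ \C_\infty $ of rank $ r $ (this is where one uses that a nonconstant entire $ \F_q $-linear function on the algebraically closed complete field $ \C_\infty $ is surjective). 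Since the associated $ t $-motive $ M_E $ is free of rank $ r $ over $ K[t] $, condition (ii) of Theorem \ref{unifchar} holds, so $ E $ is uniformizable.

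For (ii) and (iii) I would use criterion (iii), the key input being that $ M \mapsto V_\lambda( M ) $ is an exact tensor functor on effective $ t $-motives. Granting this, for uniformizable $ M_1, M_2 $ one has a $ \Gal( K^\sep / K ) $-equivariant isomorphism $ V_\lambda( M_1 \otimes M_2 ) \cong V_\lambda( M_1 ) \otimes_{F_\lambda} V_\lambda( M_2 ) $, so the restriction to $ \Gal( K_\infty^\sep / K_\infty ) $ is the tensor product of two representations with finite image and hence again has finite image; this is criterion (iii) for $ M_1 \otimes M_2 $. (That $ M_1 \otimes M_2 $ is again an effective $ t $-motive follows from $ \det( M_1 \otimes M_2 ) \cong \det( M_1 )^{\otimes r_2} \otimes \det( M_2 )^{\otimes r_1} $, which has the required shape $ \alpha( t - \th )^n $ with $ n \geq 0 $.) Similarly, if $ N $ is a subquotient of a uniformizable $ M $, then by exactness $ V_\lambda( N ) $ is a subquotient of $ V_\lambda( M ) $, so the image of $ \Gal( K_\infty^\sep / K_\infty ) $ acting on $ V_\lambda( N ) $ is a subquotient of a finite group, hence finite; criterion (iii) then yields uniformizability of $ N $.

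The main obstacle is precisely the technical input for (ii) and (iii): that $ V_\lambda $ respects tensor products and is exact. I would prove tensor compatibility by showing that the natural map $ T_\lambda( M_1 ) \otimes_{A_\lambda} T_\lambda( M_2 ) \to T_\lambda( M_1 \otimes M_2 ) $, induced on $ \sigma $-invariants modulo $ \lambda^n $ by $ x_1 \otimes x_2 \mapsto x_1 \otimes x_2 $, is an isomorphism; both sides are free $ A_\lambda $-modules of rank $ r_1 r_2 $ by Proposition \ref{strcmp}, so it suffices to check bijectivity after reduction modulo $ \lambda $. Exactness of $ V_\lambda $ I would deduce from the Artin--Schreier / Lang-type surjectivity of $ \sigma - 1 $ on $ M_{K^\sep} / \lambda^n M_{K^\sep} $ over the separably closed field $ K^\sep $, which makes the sequence of $ \sigma $-invariants right exact, together with a Mittag--Leffler argument to pass to the inverse limit over $ n $. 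Verifying this surjectivity and the compatibility of $ \sigma $-invariants with tensor products is the step I expect to require the most care.
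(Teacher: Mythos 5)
Your proposal follows essentially the same route as the paper: part (i) is Drinfeld's uniformization theorem (the paper simply cites \cite{Drinfeld74E} for the analytic argument you sketch), and parts (ii) and (iii) are deduced, exactly as in the paper, from the Galois-theoretic criterion (iii) of Theorem \ref{unifchar} together with the fact that $V_\lambda$ is an exact tensor functor (which the paper asserts without proof in its discussion of $L$-functions of $t$-motives). The extra detail you supply on why $V_\lambda$ respects tensor products and exactness is correct in outline and only makes explicit what the paper leaves implicit.
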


\begin{proof}
The first claim is shown in \cite{Drinfeld74E}. The other two
follow at once from the third characterization in Theorem \ref{unifchar}.
\end{proof}

\subsection{Good reduction}

Let $ M $ be an effective $ t $-motive over $ K $.

\begin{theorem}\label{goodred}
The following are equivalent:
\begin{enumerate}
	\item there exists a non-degenerate
	$ \sigma $-module $ \M $ over $ O_K $
	and an isomorphism $ \alpha : \M_K \to M $;
	\item $ ( H_\lambda( M, \sigma ) )_\lambda $ forms a strictly
	compatible system with exceptional set $ S $ consisting uniquely
	of infinite places of $ K $.
\end{enumerate}
Moreover, if it exists the pair $ ( \M, \alpha ) $ is unique
up to a unique isomorphism.
\end{theorem}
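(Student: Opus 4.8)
The plan is to localize the statement at each finite place of $K$ and to reduce the substantive content to Gardeyn's local theory of $\sigma$-modules, treating the global patching and the uniqueness assertion as the soft outer layer. Writing $V_\lambda(M)$ for the $\lambda$-adic realizations $H_\lambda(M,\sigma)$, I first observe that both conditions are purely local at the finite places of $K$. On the one hand, a non-degenerate model $\M$ over $O_K$ amounts to a coherent family of non-degenerate $\sigma$-stable $O_v[t]$-lattices $\M_{O_v}\subset M_{K_v}$, one for each finite place $v$, almost all equal to a fixed reference lattice. On the other hand, by the very definition, a strictly compatible system with $S$ consisting only of infinite places is the conjunction over all finite $v$ of unramifiedness of $V_\lambda(M)$ at $v$ for $\lambda$ coprime with $i^\ast v$, together with the condition that the characteristic polynomial of $\Frob_v$ lies in $A$ and is independent of $\lambda$. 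So it suffices to prove, for each finite $v$ separately, that a local integral model over $O_v$ exists if and only if the local representation has good reduction in this sense, and that such a local model is unique.

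For the implication (i)$\Rightarrow$(ii) I would argue by reduction. Given $\M_{O_v}$, reduce modulo the maximal ideal to get a non-degenerate $\sigma$-module $\overline{\M}$ over the finite residue field $\kappa(v)$. Over a finite field the theory is explicit: the power $\sigma^{\deg v}$ is $\kappa(v)$-linear, hence lies in $\End_{\kappa(v)[t]}(\overline{\M})$; Lang's theorem shows that $V_\lambda(\overline{\M})$ is an unramified representation of $\Gal(\kappa(v)^{\sep}/\kappa(v))=\widehat{\Z}$; and the characteristic polynomial of $\Frob_v$ is read off from $\sigma^{\deg v}$, so it lies in $A$ and is manifestly independent of $\lambda$. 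It remains to identify this with the generic datum: for $\lambda$ coprime with $i^\ast v$ the quotient $\M_{O_v}/\lambda^n$ is finite étale over $O_v$, so the formation of $(\,\cdot\,)^{\sigma=1}$ over the separable closure commutes with specialization. This gives that inertia at $v$ acts trivially on $V_\lambda(M)$ and that the restriction to the decomposition group is isomorphic to $V_\lambda(\overline{\M})$, whence (ii).

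The converse (ii)$\Rightarrow$(i) is the heart of the matter and is where I expect the real obstacle to lie. Here one is handed, for each $\lambda$, an unramified representation of the decomposition group at $v$ with prescribed Frobenius, and one must manufacture from this purely étale, $\lambda$-adic data a single integral $\sigma$-module $\M_{O_v}$ over $O_v[t]$ with generic fibre $M_{K_v}$. The difficulty is exactly that unramifiedness controls only the étale part of the picture, whereas $\M_{O_v}$ must in addition encode the non-étale locus $t=\theta$, where $\sigma$ degenerates, as forced by the effective determinant condition $\det(M)\cong(K[t]e,\ e\mapsto\alpha(t-\theta)^{n}e)$, and must be compatible across all $\lambda$ at once. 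This is precisely a Néron–Ogg–Shafarevich criterion for $\sigma$-modules, and I would invoke Gardeyn's good-reduction results, the same circle of ideas underlying Theorem \ref{unifchar} and Proposition \ref{strcmp}, to produce the local model: the unramified Galois datum descends to a $\sigma$-module over $\kappa(v)$, which then lifts, via the equivalence between good-reduction $\sigma$-modules over $O_v$ and their generic fibres together with the non-degeneracy constraint, to the sought lattice $\M_{O_v}$.

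Finally I would assemble the global object and settle uniqueness. Patching the $\M_{O_v}$ inside $M$ over the Dedekind base $O_K$ yields a projective $O_K[t]$-module $\M$ with $\M_K\cong M$; non-degeneracy and $\sigma$-stability are local conditions and hence survive. For uniqueness, given two models $(\M,\alpha)$ and $(\M',\alpha')$, the composite $\alpha'^{-1}\alpha$ is a $\sigma$-equivariant isomorphism of generic fibres, and since $\M$ is torsion-free over the domain $O_K[t]$ and $\M\hookrightarrow\M_K$, the natural map $\Hom_{O_K[t]}(\M,\M')\hookrightarrow\Hom_{K[t]}(\M_K,\M'_K)$ is injective, so any integral lift of $\alpha'^{-1}\alpha$ is unique. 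Existence of that lift as an isomorphism then reduces, place by place, to local uniqueness of the good model over each $O_v$, which is again part of Gardeyn's theory (the reduction functor being fully faithful on good-reduction objects); this forces $\alpha'^{-1}\alpha$ to carry $\M_{O_v}$ onto $\M'_{O_v}$ for every $v$, hence $\M$ onto $\M'$.
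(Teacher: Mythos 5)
Your proposal is correct and follows essentially the same route as the paper: the paper's entire proof is the single sentence ``This follows easily from Theorem 1.1 of \cite{Gardeyn02}'', i.e.\ it delegates exactly the step you identify as the heart of the matter (the N\'eron--Ogg--Shafarevich-type criterion (ii)$\Rightarrow$(i)) to Gardeyn's local good-reduction theorem. Your localization at finite places, the easy direction via reduction mod $v$, the patching over the Dedekind base, and the uniqueness argument are just the ``easily'' that the paper leaves implicit.
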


If these equivalent statements hold we say that $ M $ has
\emph{everywhere good reduction} and we call $ \M $ a 
\emph{good model} for $ M $.

\begin{proof}
This follows easily from Theorem 1.1 of \cite{Gardeyn02}.
\end{proof}

\subsection{The $ L $-function of an effective $ t $-motive}

Let $ M $ be an effective $ t $-motive over $ K $. Let $ S $ be an exceptional
set of places of $ K $ for the strictly compatible system of
Galois representations $ \rho = ( \rho_\lambda )_\lambda $ associated
with $ M $. 

Let $ v $ be a finite place of $ K $ corresponding to a prime ideal
$ I \subset O_K $. Denote by $ \N v \in A $ the unique monic generator
of the inverse image image under $ i : A \to \F_q[\th] $
of the norm of $ I $ in $ \F_q[ \th ] $. 

For any finite $ v $ that is not in $ S $ define
\[
        P_v( X ) := \det( 1 - X \rho_\lambda ( \Frob_v ) )
        \in A[ X ]
\]
using any $ \lambda $ such that $ i( \lambda ) $ is coprime with $ v $ and
\[
        L_S( M, n ) := \prod_{ v \notin S }
        P_v( \N v^{ -n } )^{ -1 },
\]
the product being over the finite places $ v $ that are not in $ S $.
This converges to an element of $ F_\infty $ for all
sufficiently large integers $ n $.

Now assume (for simplicity) that $ M $ has everywhere good
reduction and let $ \M $ be a model for $ M $ over $ O_K $.
Let $ v $ be a finite place of $ K $ and $ k(v) $ the residue class
field of $ v $. Let $ d(v) $ be the degree of $ k(v) $ over $ \F_q $.
Note that $ \sigma^{d(v)} $ is a \emph{linear} endomorphism of
$ \M_{k(v)} $. The Euler factors in $ L( M, n ) $ can be computed
as follows:

\begin{proposition}\label{eulerfactor}
$ P_v( X ) = \det( 1 - X \sigma^{d(v)} | \M_{k(v)} ) $.\qed
\end{proposition}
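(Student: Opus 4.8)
The plan is to reduce everything to the special fibre at $v$ and then to a piece of semilinear algebra over $\bar{\F}_q$. Since $M$ has everywhere good reduction, Theorem \ref{goodred} furnishes the good model $\M$ over $O_K$ with $\M_K\cong M$, and for the chosen $\lambda$ (coprime with $i^\ast v$) the determinant condition in the definition of an effective $t$-motive forces $\sigma$ to be invertible on $\M_{k(v)}$ modulo $\lambda$. First I would show that the system is unramified at $v$ and that $T_\lambda(M)$, together with its $\Frob_v$\dash action, is computed entirely from the reduction $\M_{k(v)}$: working over the completion of $K$ at $v$, whose maximal unramified extension has residue field $\bar{\F}_q$, the invertibility of $\sigma$ mod $\lambda$ lets one lift $\sigma$-fixed points uniquely by a Hensel-type argument, so that reduction induces an isomorphism $(M_{K^\sep}/\lambda^n)^{\sigma=1}\isomto(\M_{\bar{\F}_q}/\lambda^n)^{\sigma=1}$, equivariantly for the decomposition group acting through $\Gal(\bar{\F}_q/k(v))$. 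This is where I would lean on the good-reduction results of \cite{Gardeyn02} underlying Theorem \ref{goodred}.

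Then, writing $N:=\M_{k(v)}$, a free $k(v)\otimes_{\F_q}A$-module of rank $r$ on which $\sigma^{d(v)}$ is $k(v)\otimes_{\F_q}A$-linear, I would carry out the comparison over $\bar{\F}_q$. The module $N_{\bar{\F}_q}$ is an étale $\sigma$-module (as $\sigma$ is invertible mod $\lambda$), so the additive form of Hilbert~90 trivializes it: $(N_{\bar{\F}_q}/\lambda^n)^{\sigma=1}$ is free of rank $r$ over $A/\lambda^n$ and the natural map $(N_{\bar{\F}_q}/\lambda^n)^{\sigma=1}\otimes_{\F_q}\bar{\F}_q\to N_{\bar{\F}_q}/\lambda^n$ is an isomorphism. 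Passing to the limit recovers Proposition \ref{strcmp} locally, exhibiting $T_\lambda(M)$ as free of rank $r$ over $A_\lambda$. The key identity is that on $N_{\bar{\F}_q}/\lambda^n$ one has $\sigma^{d(v)}=(\sigma^{d(v)}_N\otimes 1)\circ(1\otimes\phi)$, where $\phi$ is the arithmetic Frobenius of $\Gal(\bar{\F}_q/k(v))$ and $\sigma^{d(v)}_N$ is the linear operator on $N$; restricting to $\sigma$-invariants, where $\sigma$ acts as the identity, shows that geometric Frobenius $\Frob_v=\phi^{-1}$ acts exactly as $\sigma^{d(v)}_N\otimes 1$. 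Under the trivialization this identifies the operator $\Frob_v$ on $V_\lambda(M)$ with the linear operator $\sigma^{d(v)}$ on $\M_{k(v)}$, whence their characteristic polynomials coincide, which is the asserted equality $P_v(X)=\det(1-X\sigma^{d(v)}\mid\M_{k(v)})$.

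I expect the main obstacle to be the first step: making precise, with the correct integral structures, that the Tate module together with its Frobenius action is insensitive to replacing $M$ over $K$ by its reduction $\M_{k(v)}$. This requires knowing that $\sigma$-fixed points over $K^\sep$ come from the unramified closure and lift uniquely from $\bar{\F}_q$, for which the invertibility of $\sigma$ modulo $\lambda$ (guaranteed by the good model together with the coprimality of $\lambda$ with $i^\ast v$) and the good-reduction theorem of Gardeyn are the essential inputs. The semilinear-algebra step is then routine, the only point demanding care being the bookkeeping of the Frobenius normalization, namely checking that the $\Frob_v$ entering the definition of $P_v$ is the geometric Frobenius, so that it matches $\sigma^{d(v)}$ and not its inverse.
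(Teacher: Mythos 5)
Your argument is correct and is the standard one; the paper states this proposition without proof, and what you write is precisely the argument it leaves implicit (the $\sigma$-module analogue of computing Frobenius eigenvalues via Lang's theorem on the special fibre). In particular you correctly isolate the two essential inputs: étaleness of the $\lambda^n$-torsion at $v$, coming from the good model together with $\lambda$ being coprime to $i^\ast v$, and the fact that it is the \emph{geometric} Frobenius that acts as the linear operator $\sigma^{d(v)}$ on the trivialized Tate module --- the convention forced by the paper's normalization $L(M\otimes C, n+1)=L(M,n)$ and reconciling the Carlitz example, where the (arithmetic) Frobenius is said to act as $f(t)^{-1}$ while $\det(1-X\sigma^{d(v)}\mid \M_{k(v)})=1-Xf(t)$.
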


\subsection{The $ L $-function of a $ t $-motive}

The category $ \tM_\eff $
of effective $ t $-motives over $ K $ with its tensor product
is an $ A $-linear tensor category, but it is not closed under
duals.

After formally inverting the object $ C $ it embeds into an $ A $-linear
\emph{rigid} tensor category $\tM$. The objects of this latter category
are called \emph{$ t $-motives}. They
are formal expressions $ M \otimes C^{\otimes n} $ with $ M $ a
$ t $-motive and $ n \in \Z $, and morphisms are defined as
\[
	\Hom_{\tM}( 
	M_1\otimes C^{\otimes n_1}, 
	M_2\otimes C^{\otimes n_2} )
	:=
	\Hom_{\tM_\eff}(
	M_1\otimes C^{\otimes n_1 + n},
	M_2\otimes C^{\otimes n_2 + n} )
\]
for $ n $ sufficiently large so that both $ n_1 + n $ and $ n_2 + n $
become non-negative. This is independent of $ n $ because
for every pair $ M_1, M_2 $ of effective $t$-motives there is a
canonical isomorphism
\[
	\Hom_{\tM_\eff}( M_1, M_2 ) =
	\Hom_{\tM_\eff}( M_1 \otimes C, M_2 \otimes C ).
\]
Given a $ t $-motive $ M $ there exists a dual $ t $-motive $ M^\vee $,
and the operations $ ( - )^\vee $ and $ \otimes $ satisfy all the usual
properties from representation theory.
(Proofs and more details can be found in \S 2 of \cite{Taelman08}.)

Since the functors
\[
	V_\lambda :
	\tM_\eff \to
	\{ \Gal( K^\sep / K ) \text{-representations} / F_\lambda \}
\]
respect the tensor product, they extend to $\tM$. 
In particular, this allows us to define $ L $-functions for
$t$-motives. 

We have that $ L( M \otimes C, n + 1 ) = L( M, n ) $, which allows
us to shift special values around in a way that is more or less
obvious when working with $ t $-motives but rather non-trivial
when working with abelian $ t $-modules. This is one of the reasons
that we consider $ t $-motives in this paper, rather than working
uniquely with abelian $ t $-modules. Another reason is given by
the notion of good reduction, which is quite straight-forward 
on the $ t $-motives side, but rather subtle on the abelian
$ t $-modules side.

\subsection{Convergence}

So far we have ignored questions of convergence. The following proposition
guarantees that the special values that occur in our conjecture will
be well-defined.

\begin{proposition}\label{convergence}
If $ M $ is an effective $ t $-motive over $ K $ that is finitely
generated as a $ K[ \tau ] $-module, then the Euler product
for $ L( M^\vee, 0 ) $ converges.
\end{proposition}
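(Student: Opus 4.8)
The plan is to show that each Euler factor of $L(M^\vee,0)$ is close to $1$ in the complete non\dash archimedean field $F_\infty=\F_q((1/t))$, with an error tending to $0$; the product then converges. Write $\mathrm{ord}_\infty$ for the valuation on $F_\infty$ with $\mathrm{ord}_\infty(t)=-1$. Discarding the finite set of places of bad reduction (a finite product), fix a place $v$ of good reduction with reduction $\M_{k(v)}$. By Proposition~\ref{eulerfactor}, in its evident local form, the operators $\Frob_v$ on $V_\lambda(M)$ and $\sigma^{d(v)}$ on $\M_{k(v)}$ have the same characteristic polynomial; write $\beta_1,\dots,\beta_r$ for their eigenvalues. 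Then $\Frob_v$ acts on $V_\lambda(M^\vee)=V_\lambda(M)^\vee$ with eigenvalues $\beta_j^{-1}$, so that the Euler factor of $M^\vee$ at $v$, evaluated at $\N v^{-0}=1$, is
\[
 \det\bigl(1-\Frob_v\mid V_\lambda(M^\vee)\bigr)=\prod_{j=1}^r\bigl(1-\beta_j^{-1}\bigr).
\]
Convergence therefore comes down to a uniform estimate $\mathrm{ord}_\infty(\beta_j^{-1})\ge c\,d(v)$ with $c>0$ independent of $v$ and $j$; equivalently, to the positivity of the least Frobenius weight of $M$.

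Two ingredients give this. The first is purity: for an effective $t$\dash motive the $\infty$\dash adic slopes of $\sigma^{d(v)}$ on $\M_{k(v)}$ are $d(v)$ times a fixed finite multiset of weights $w_1,\dots,w_r\in[0,w]$ (where $\det M\cong C^{\otimes w}$), constant across the good\dash reduction locus -- the function\dash field analogue of the Weil bounds. The second is that the hypothesis ``$M$ finitely generated over $K[\tau]$'' excludes the weight $0$. Indeed, let $M'\subseteq M$ be the largest sub\dash$\sigma$\dash module on which $\sigma$, hence $\tau$, acts bijectively (the slope\dash zero part). Since $K[\tau]$ is left Noetherian and $M$ is finitely generated over it, $M'$ is too; being a $K[t]$\dash submodule of the finite free module $M$ it is finite free, and with $\sigma$ bijective it is a non\dash degenerate effective $t$\dash motive, finitely generated over $K[\tau]$. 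Its abelian $t$\dash module $E'$ then satisfies $\Lie_{E'}(K)=\Hom_K(M'/K\sigma(M'),K)=0$, so $\dim E'=0$ and $E'=0$; by Anderson's anti\dash equivalence $M'=0$. Thus $0\notin\{w_1,\dots,w_r\}$, and $s_{\min}:=\min_j w_j$ is a fixed positive number.

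Combining the two, $\mathrm{ord}_\infty(\beta_j^{-1})=w_j\,d(v)\ge s_{\min}\,d(v)$ for every good $v$ and every $j$, so $\beta_j^{-1}\to0$ and
\[
 \mathrm{ord}_\infty\Bigl(\textstyle\prod_{j}(1-\beta_j^{-1})-1\Bigr)\ \ge\ \min_j\mathrm{ord}_\infty(\beta_j^{-1})\ \ge\ s_{\min}\,d(v)\ \longrightarrow\ \infty,
\]
which is exactly the decay needed. (As a check, $M^\vee\cong(\textstyle\bigwedge^{r-1}M)\otimes C^{\otimes(-w)}$, so the shift relation gives $L(M^\vee,0)=L(\bigwedge^{r-1}M,w)$; positivity of $s_{\min}$ is precisely what places the integer $w$ strictly beyond the abscissa of convergence of the effective motive $\bigwedge^{r-1}M$.) The main obstacle is the purity input of the first ingredient -- the uniform boundedness and constancy of the Frobenius weights over all places -- whereas the decisive use of the $K[\tau]$\dash finiteness, namely the vanishing of the slope\dash zero part, is immediate from the formula for $\Lie_E$.
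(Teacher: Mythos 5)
Your overall strategy is the same as the paper's: reduce convergence to a lower bound, growing linearly in $d(v)$, on the $1/t$\dash adic valuations of the inverse eigenvalues of $\sigma^{d(v)}$ on $\M_{k(v)}$, and use the $K[\tau]$\dash finiteness to exclude the slope that would obstruct this. But the step where you actually invoke the hypothesis --- the exclusion of weight $0$ --- does not work as written. You define $M'$ as the largest $K[t]$\dash submodule of $M$ on which $\sigma$ is bijective and identify it with the slope-zero part; neither identification holds. Since $\sigma$ is $q$\dash semilinear, over the non-perfect field $K$ it is essentially never surjective on a nonzero $K[t]$\dash module: already for the unit motive $\1 = ( K[t]e, e \mapsto e )$ one gets $M' = 0$, although $\1$ has slope $0$, is not finitely generated over $K[\tau]$, and its Euler product at $s=0$ does not converge. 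So the vanishing of your $M'$ carries no information about the slopes, and your argument would ``prove'' convergence for $\1$ as well. Moreover, even over a perfect or separably closed base the slope filtration lives on $M \otimes_{K[t]} K^\sep(( t^{-1} ))$ and need not descend to a $K[t]$\dash submodule of $M$, so the slope-zero part cannot be fed into Anderson's anti-equivalence as a sub-$t$\dash motive with vanishing Lie algebra. The paper's replacement for this step: finite generation over $K[\tau]$ yields a finite-dimensional $V \subset M(( t^{-1} ))$ with $\bigcup_{i,j} t^{-i} K^\sep \sigma^j ( V )$ dense, and comparison with the diagonal form $\sigma^n = \mathrm{diag}( t^{-n\lambda_1}, \dots, t^{-n\lambda_r} )$ furnished by the classification of such semilinear modules over $K^\sep(( t^{-1} ))$ forces every $\lambda_i < 0$. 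That is the argument you need in place of your $M'$.

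On the other ingredient: you assert, and yourself flag as ``the main obstacle,'' that the $1/t$\dash adic slopes of $\sigma^{d(v)}$ are $d(v)$ times a multiset of weights constant over the entire good-reduction locus. You neither need nor should claim constancy at every place; for convergence it suffices that the estimate hold for all but finitely many $v$, and that weaker statement is what is actually available: the generic Newton polygon of $\sigma$ on $M(( t^{-1} ))$ is a well-defined invariant, and it is unchanged under reduction at almost all places. The paper is itself terse on this specialization step, but it reduces everything to a statement about the generic fibre covered by the cited classification, whereas in your write-up the global purity statement is left unsupported and sits logically upstream of the whole argument.
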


\begin{proof}
As one might expect, the proof is based upon bounds for the 
$ 1/t $-adic valuations of eigenvalues of Frobenius. 

Consider the $ K^\sep(( t^{-1} )) $-vector space
$ M(( t^{-1} )) := M \otimes_{ K[t] } K^\sep(( t^{-1} )) $. The action of
$ \sigma $ on $ M $ extends to action on $ M(( t^{-1} )) $ that is
$ F_\infty $-linear and satisfies $ \sigma( x m ) = x^q \sigma( m ) $ 
for all $ x \in K^\sep $ and $ m \in M(( t^{-1} )) $. Since $ \sigma $
is not linear it does not make sense to speak about eigenvalues of
$ \sigma $, yet by \cite[\S 5.1]{Taelman08} the valuations
$ \lambda_1, \lambda_2, \cdots, \lambda_r $ of the
eigenvalues of a matrix representing $ \sigma $ relative to a chosen
basis of $ M $ do not depend on the chosen basis. (In other words:
the newton polygon of the characteristic polynomial is a well-defined
invariant of $ ( M(( t^{-1} )), \sigma ) $.)

{\it Claim.} $ \lambda_i < 0 $ for all $ i $.

The finite generation of $ M $ as $ K[ \tau ] $-module guarantees
that there exists a finite dimensional $ K^\sep $-vector subspace
$ V \subset M(( t^{-1} )) $ such that
\begin{equation}\label{dieufin}
	\cup_{ i \geq 0 } \cup_{ j \geq 0 }
	t^{-i} K^\sep \sigma^j( V )
	\text{ is dense in }
	M (( t^{-1} )).
\end{equation}
But from the classification \cite[\S 5.1]{Taelman08} it follows that
there exists a positive integer $ n $ and a basis of
$ M(( t^{-1} )) $ such that the
action of $ \sigma^n $ with respect to that basis is given by
\begin{eqnarray*}
	\left( \begin{array}{cccc}
		t^{-n\lambda_1}  &  0  &  \cdots  &  0 \\
		0  &  t^{-n\lambda_2}  &  \cdots  &  0 \\
		\vdots  &  \vdots  &  &  \vdots \\
		0  &  0  &  \cdots  &  t^{-n\lambda_r} 
	\end{array} \right).
\end{eqnarray*}
Hence for (\ref{dieufin}) to hold with a finite dimensional $ V $
one needs that $ \lambda_i < 0 $ for all $ i $, which proves the claim.

To finish the proof it now suffices to observe that for almost all
places $ v $ the Newton polygon of $ \sigma $ does not change under
reduction mod $ v $.
\end{proof}

\begin{remark}
	The converse holds as well: if $ M $ is an effective $ t $-motive
over $ K $ then the Euler product defining $ L( M^\vee, 0 ) $ converges
if and only if $ M $ is finitely generated over $ K[ \tau ] $. This
follows essentially from \cite[Theorem 5.3.1]{Taelman08}.
\end{remark}

\section{The conjecture}

For a $ t $-module $ E $ over $ K $ define
\[
	W_E := \Lie_E / ( t - \th ) \Lie_E 
\]
and write $ w $ for the canonical projection $ \Lie_E \to W_E $.
Note that $ W_E( K_\infty ) $ carries naturally the structure
$ F_\infty $-vector space (coming from the action of $ A $)
as well as a that of a $ K_\infty $-vector space and that the
two structures coincide under the identification ``$ t = \th $.''

Now assume that the $ t $-motive $ M $ associated with $ E $ has
everywhere good reduction and let $ ( \M, \alpha: \M_K \isomto M ) $
be a good model. We define $ E( O_K ) \subset E( K ) $ to be the image of
the map
\[
	\Hom_{O_K[\tau]} ( \M, O_K ) \to \Hom_{K[\tau]} ( M, K ) = E( K )
\]
induced by $ \alpha $. Also we define $ \Lie_E( O_K ) $ as
the image of
\[
	\Hom_{O_K} ( \M / \sigma \M, O_K ) \to
	\Hom_K ( M / \sigma M, K ) = \Lie_E( K ).
\]
and $ W_E( O_K ) \subset W_E( K ) $ as the image of $ \Lie_E( O_K ) $
under $ w $.

\begin{conjecture}
Let $ E $ be a uniformizable abelian $ t $-module over $ K $ such that
the associated $ t $-motive $ M $ has has everywhere good reduction.

There exists a sub-$ A $-module $ Z \subset \Lie_E( K_\infty ) $ of rank
$ \dim W_E $ such that $ \exp_E ( Z ) \subset E( O_K ) $ and such that
\[
	\bigwedge^{\dim W_E}_{A} w( Z ) = 
	L( E, 0 ) \cdot
	\left( \bigwedge^{\dim W_E}_{A}  W_E ( O_K ) \right)
\]
as $ A $-lattices inside the $ 1 $-dimensional
$ F_\infty $-vector space
$ \bigwedge^{\dim W_E}_{ K_\infty } W_E( K_\infty ) $.
\end{conjecture}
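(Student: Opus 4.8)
The plan is to recast the conjectured identity as an equality of covolumes of $ A $-lattices and then to prove a ``class number formula'' that evaluates that covolume in terms of the Euler product for $ L(E,0)=L(M^\vee,0) $, combining a Dirichlet\dash type unit theorem with a trace formula. Throughout write $ r := \dim W_E $, and recall that since $ K\subset K_\infty=\F_q((\th^{-1})) $ the map $ i $ identifies $ F_\infty $ with $ K_\infty $ (this is the ``$ t=\th $'' identification already noted before the conjecture), so that $ W_E(K_\infty) $ is an $ r $-dimensional $ F_\infty $-vector space in which both $ w(Z) $ and $ W_E(O_K) $ are $ A $-lattices of full rank $ r $. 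Asserting that their top exterior powers differ by the factor $ L(E,0) $ is then exactly the assertion that one covolume in the line $ \bigwedge^r_{K_\infty} W_E(K_\infty) $ equals $ L(E,0) $ times another.

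First I would introduce the relevant $ A $-modules. Set $ U := \exp_E^{-1}(E(O_K))\subset\Lie_E(K_\infty) $, the \emph{unit module}, and let $ H := E(K_\infty)/\bigl(E(O_K)+\exp_E(\Lie_E(K_\infty))\bigr) $ be the \emph{class module}. Using uniformizability (Theorem~\ref{unifchar}), which guarantees that $ \exp_E $ is surjective with period lattice $ \ker\exp_E $ of rank $ \rk M $, together with the good model $ \M $ over $ O_K $ furnished by Theorem~\ref{goodred}, I expect to prove a Dirichlet\dash type unit theorem: $ U $ is a finitely generated $ A $-module whose image $ w(U)\subset W_E(K_\infty) $ is an $ A $-lattice of full rank $ r $, and $ H $ is a \emph{finite} $ A $-module, whose order ideal has a well-defined monic generator $ [H]\in A $ (as $ A $ is a principal ideal domain). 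Granting this, $ \exp_E(U)\subseteq E(O_K) $ holds by construction, so once the covolume of $ w(U) $ is known it suffices to take a submodule $ Z\subseteq U $ with $ [\,w(U):w(Z)\,]=[H] $; then $ \exp_E(Z)\subset E(O_K) $ and $ \bigwedge^r_A w(Z)=[H]\cdot\bigwedge^r_A w(U) $.

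The technical core is the trace formula evaluating the covolume of $ w(U) $ as an Euler product. Using Proposition~\ref{eulerfactor} to write each local factor as $ P_v(1)=\det\bigl(1-\sigma^{d(v)}\mid\M_{k(v)}\bigr) $, and the fact that $ \sigma^{d(v)} $ is $ \F_q $-linear on the finite module $ \M_{k(v)} $, I would rewrite each Euler factor of $ L(M^\vee,0) $ as a ratio of $ A $-module orders attached to the finite reductions $ \M_{k(v)} $ (orders of $ E(k(v)) $ against $ \Lie_E(k(v)) $), the product converging by Proposition~\ref{convergence}. The plan is then to establish
\[
	\bigwedge\nolimits^r_A w(U)=\frac{L(E,0)}{[H]}\cdot\bigwedge\nolimits^r_A W_E(O_K)
\]
by comparing, at each finite truncation, the short exact sequence built from $ \exp_E $ and the reduction maps and invoking multiplicativity of $ A $-module orders along exact sequences; a limiting argument — legitimate because everything in sight is discrete and $ \F_q $-linear, so no archimedean analysis intervenes — passes from the finite levels to the convergent product. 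Combining this with the choice of $ Z $ above yields the conjecture.

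The main obstacle is exactly this trace formula for $ r $-dimensional abelian $ t $-modules. In the Drinfeld case $ t-\th $ acts as zero on $ \Lie_E $, so $ W_E=\Lie_E $ and the regulator/class\dash number formalism applies rather directly; in general $ t-\th $ is only nilpotent, and one must control how the passage to $ W_E=\Lie_E/(t-\th)\Lie_E $ interacts with the integral structures $ \Lie_E(O_K) $ and with the local orders, and verify that the whole construction is independent of the good model (the uniqueness of $ (\M,\alpha) $ in Theorem~\ref{goodred} should supply this). Establishing finiteness of $ H $ and full rank of $ w(U) $ at this level of generality, and checking that normalising through $ W_E(O_K) $ gives precisely the right factor, are where I expect the real difficulty to lie; only for powers of the Carlitz motive do these inputs reduce to the Anderson--Thakur theorem, which is the single case one can currently regard as settled.
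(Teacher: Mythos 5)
You should first be aware that the statement you are addressing is presented in the paper as a \emph{conjecture}: the author offers no proof, only the special case $M = C^{\otimes n}$ (Theorem \ref{carlitzpower}, quoted from Anderson and Thakur) and numerical evidence. There is therefore no argument in the paper to compare yours against, and your text must stand or fall as a proof on its own terms. As such it has a genuine gap: it is a reduction of the conjecture to two unproved assertions which together carry essentially all of its content. These are (a) the ``Dirichlet-type unit theorem'' --- that $U := \exp_E^{-1}(E(O_K))$ is a finitely generated $A$-module with $w(U)$ an $A$-lattice of full rank $r = \dim W_E$ and that the class module $H$ is finite --- and (b) the ``trace formula'' $\bigwedge^r_A w(U) = \bigl(L(E,0)/[H]\bigr)\cdot\bigwedge^r_A W_E(O_K)$. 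You say only that you ``expect to prove'' (a) and that ``the plan is to establish'' (b); neither follows from anything quoted in the paper (Theorems \ref{unifchar} and \ref{goodred} give surjectivity of $\exp_E$ and the good model, but say nothing about the arithmetic of $E(O_K)$), and no mechanism of proof is supplied for either.

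The most concrete failure point is the limiting argument for (b). The claim that one can pass from multiplicativity of $A$-module orders ``at each finite truncation'' to the full Euler product merely because ``everything in sight is discrete'' is not legitimate: the identification of an infinite convergent product of local ratios with a global covolume is precisely the analytic core of any such class number formula, and discreteness does not allow you to interchange the infinite product with the covolume computation. Proposition \ref{convergence} guarantees convergence of the product but says nothing about what it converges \emph{to}; what is needed is a genuine trace formula for a non-linear ($q$-power semilinear) operator acting on an infinite-dimensional $F_\infty$-module, and no such tool appears in the paper or in your sketch. The preliminary rewriting of $P_v(1) = \det(1 - \sigma^{d(v)}\mid\M_{k(v)})$ from Proposition \ref{eulerfactor} as a ratio of Fitting ideals of $E(k(v))$ and $\Lie_E(k(v))$ is itself a nontrivial lemma requiring proof, especially for $\dim E > 1$ where $t-\th$ is only nilpotent on $\Lie_E$. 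Your closing paragraph honestly identifies these as ``where the real difficulty lies,'' which is correct --- but it means the proposal is a research program rather than a proof, and the conjecture remains exactly as open after your argument as before it. (For what it is worth, the program itself is a reasonable one and the final reduction --- choosing $Z \subseteq U$ with $\bigwedge^r_A w(Z) = [H]\cdot\bigwedge^r_A w(U)$, which is possible over the principal ideal domain $A$ once (a) is known --- is sound; the gap is entirely in (a) and (b).)
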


\begin{remark} $ L( E, 0 ) = L( M^\vee, 0 ) $.
\end{remark}

\begin{theorem}[\cite{Anderson90}] \label{carlitzpower}
For $ M = C^{\otimes n} $
the conjecture holds.\qed
\end{theorem}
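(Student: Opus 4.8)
The plan is to strip the conjecture of its lattice formalism until, for $ M = C^{\otimes n} $, it collapses to a single scalar identity, and then to recognise that identity as the theorem of Anderson and Thakur.

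First I would fix the geometry of $ E $. The abelian $ t $-module attached to $ M = C^{\otimes n} $ is the $ n $-th tensor power of the Carlitz module: it has dimension $ n $, and on $ M/\sigma M \cong K[t]/(t-\th)^n $ the element $ t - \th $ acts as a single nilpotent Jordan block. Via the identification $ \Lie_E( K ) = \Hom_K( M/\sigma M, K ) $ the same holds on $ \Lie_E $, so $ W_E = \Lie_E/(t-\th)\Lie_E $ is one-dimensional; thus $ \dim W_E = 1 $ and all the exterior powers in the conjecture are trivial. The hypotheses are in force: $ C^{\otimes n} $ is uniformizable (the Carlitz module is a Drinfeld module, hence uniformizable, and tensor powers of uniformizable $ t $-motives are uniformizable), and it has everywhere good reduction with the evident non-degenerate model $ \M = ( O_K[t]\,e,\ e \mapsto (t-\th)^n e ) $. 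Writing $ W_E( O_K ) = A\,w_0 $ and taking $ Z = A\,z $ free of rank one, the conjectural equality of $ A $-lattices in the line $ W_E( K_\infty ) $ reduces to: there is a vector $ z \in \Lie_E( K_\infty ) $ with $ \exp_E( z ) \in E( O_K ) $ and $ w( z ) \in \F_q^\times \cdot L( E, 0 )\, w_0 $.

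Next I would compute the special value. By Proposition \ref{eulerfactor} applied to the model $ \M $, the linear operator $ \sigma^{d(v)} $ acts on $ \M_{k(v)} $ as multiplication by $ (\N v)^n $, so the Euler factor of $ L( M, \cdot ) $ at a finite place $ v $ is $ 1 - (\N v)^n X $ and that of the dual $ L( M^\vee, \cdot ) $ is $ 1 - (\N v)^{-n} X $. Hence, using the Remark $ L( E, 0 ) = L( M^\vee, 0 ) $,
\[
	L( E, 0 ) = \prod_v \bigl( 1 - (\N v)^{-n} \bigr)^{-1} = \zeta( n ),
\]
the product running over the finite places of $ K = \F_q( \th ) $; convergence is guaranteed by Proposition \ref{convergence}, since $ C^{\otimes n} $ is finitely generated over $ K[\tau] $. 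In parallel I would make the integral structure explicit: starting from $ \M $ one reads off from $ \Hom_{O_K}( \M/\sigma\M, O_K ) $ a distinguished generator $ w_0 $ of the rank-one module $ W_E( O_K ) $, against which the identity is to be tested.

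It remains to produce $ z $, and this is precisely the content of \cite{Anderson90}: Anderson and Thakur construct an explicit integral point of $ C^{\otimes n} $ whose logarithm has its surviving $ W_E $-coordinate equal to $ \zeta( n ) $, up to the normalising Carlitz factorial and the power of the period $ \tilde\pi $ occurring in their formula. Translating their coordinatewise statement into the intrinsic language above yields $ w( z ) = \gamma\,\zeta( n )\,w_0 $ for some $ \gamma \in \F_q^\times $, which is exactly the required equality of lattices. I expect the main obstacle to be this translation together with its normalisation. One must check that the standard $ t $-module coordinates of Anderson and Thakur identify $ W_E $ with the correct one-dimensional quotient, that their point genuinely lands in $ E( O_K ) $ and not merely in $ E( K ) $, and — the real crux — that the Carlitz factorial appearing in their formula is accounted for precisely by the intrinsic integral structure $ W_E( O_K ) $ cut out by $ \M $; in other words, that $ W_E( O_K ) $ equals that factorial times the naive coordinate lattice, so that the identity holds on the nose as $ A $-lattices and not merely up to a scalar in $ F_\infty^\times $. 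Finally, for a base $ K $ strictly larger than $ \F_q( \th ) $ I would reduce to this case by base change, matching $ L( E, 0 ) $ over $ K $ with $ \zeta( n ) $ via the norm and comparing the integral structures $ E( O_K ) $ and $ W_E( O_K ) $ with their $ \F_q[ \th ] $-counterparts.
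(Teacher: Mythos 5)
The paper offers no argument for this theorem beyond the citation: the statement carries a \verb|\qed| and the reduction to Anderson--Thakur is left entirely implicit. Your proposal reconstructs exactly that intended reduction, and correctly: $\dim W_E = 1$ for $C^{\otimes n}$, the evident model $(O_K[t]e,\ e\mapsto (t-\th)^n e)$ gives everywhere good reduction, $\sigma^{d(v)}$ acts on $\M_{k(v)}$ as $(\N v)^n$ so that $L(E,0)=\zeta(n)$, and the conjecture collapses to producing a single logarithm vector. The one thing to note is that the step you flag as ``the real crux'' --- reconciling the Carlitz factorial in the Anderson--Thakur formula with the lattice $W_E(O_K)$ cut out by $\M$, so that the identity holds as an equality of $A$-lattices and not just up to a factor in $F_\infty^\times$ --- is precisely the entire content hidden behind the paper's \verb|\qed|, and neither you nor the paper actually carries it out; everything else in your write-up is routine and matches the paper's framework.
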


\begin{proposition} If the conjecture holds for $ M_1 $ and $ M_2 $
then it also holds for $ M_1 \oplus M_2 $. \qed
\end{proposition}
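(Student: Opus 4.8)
The plan is to exploit the fact that direct sums of effective $t$-motives correspond, under the anti-equivalence of \S2.3, to products of abelian $t$-modules, and that every ingredient of the conjecture is additive in this decomposition. Write $E_1,E_2$ for the abelian $t$-modules attached to $M_1,M_2$ and $E$ for the one attached to $M := M_1\oplus M_2$. Since $E_M=\Hom_{K[\tau]}(M,-)$ carries finite direct sums of $t$-motives to finite products, we have $E=E_1\times E_2$, and consequently $\Lie_E=\Lie_{E_1}\oplus\Lie_{E_2}$, $W_E=W_{E_1}\oplus W_{E_2}$, $w=w_1\oplus w_2$, and $\exp_E=\exp_{E_1}\times\exp_{E_2}$ (the last by the uniqueness of the entire homomorphism tangent to the identity). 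In particular $d:=\dim W_E=d_1+d_2$ with $d_i:=\dim W_{E_i}$, and $M$ is again uniformizable of everywhere good reduction: uniformizability follows from characterization (iii) of Theorem \ref{unifchar} applied to the split Galois representation, and good reduction from Theorem \ref{goodred} together with its uniqueness clause, which forces the good model of $M$ to be $\M_1\oplus\M_2$.

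First I would check that the integral structures split. From $\M=\M_1\oplus\M_2$ one reads off directly that $E(O_K)=E_1(O_K)\times E_2(O_K)$ and $W_E(O_K)=W_{E_1}(O_K)\oplus W_{E_2}(O_K)$, since all three are defined as images of $\Hom$-functors out of $\M$, which are additive in $\M$. Next I would record the multiplicativity of the $L$-value: because $V_\lambda$ is additive, each local factor factors as $P_v(X)=P_v^{(1)}(X)\,P_v^{(2)}(X)$, whence $L(E,0)=L(M^\vee,0)=L(M_1^\vee,0)\,L(M_2^\vee,0)=L(E_1,0)\,L(E_2,0)$.

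With these compatibilities in hand, the construction of the module $Z$ is immediate. Let $Z_1\subset\Lie_{E_1}(K_\infty)$ and $Z_2\subset\Lie_{E_2}(K_\infty)$ be the submodules provided by the conjecture for $M_1$ and $M_2$, and set $Z:=Z_1\oplus Z_2\subset\Lie_E(K_\infty)$. Then $Z$ has rank $d_1+d_2=d$, and $\exp_E(Z)=\exp_{E_1}(Z_1)\times\exp_{E_2}(Z_2)\subset E_1(O_K)\times E_2(O_K)=E(O_K)$. Moreover $w(Z)=w_1(Z_1)\oplus w_2(Z_2)$, so under the canonical isomorphism of top exterior powers $\bigwedge^d_{K_\infty}W_E(K_\infty)\cong\bigwedge^{d_1}_{K_\infty}W_{E_1}(K_\infty)\otimes_{K_\infty}\bigwedge^{d_2}_{K_\infty}W_{E_2}(K_\infty)$ one has $\bigwedge^d_A w(Z)=\bigwedge^{d_1}_A w_1(Z_1)\otimes_A\bigwedge^{d_2}_A w_2(Z_2)$ and likewise for $\bigwedge^d_A W_E(O_K)$. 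Substituting the two instances of the conjecture and collecting the scalars gives the claimed identity, with the product $L(E_1,0)L(E_2,0)$ recombining into $L(E,0)$ by the previous paragraph.

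The routine verifications above are harmless; the one step deserving genuine care is the compatibility of the \emph{integral} structures with the direct sum decomposition — that is, that the good model, $E(O_K)$ and $W_E(O_K)$ genuinely split as direct sums rather than merely containing the sum of the pieces. This rests on the uniqueness of the good model in Theorem \ref{goodred} and on the additivity of the relevant $\Hom$-functors on the split objects; once this is secured, the matching of $A$-lattices through the tensor decomposition of the top exterior power, and the bookkeeping that the two scalar factors multiply to $L(E,0)$, are the only remaining points, both of which are formal.
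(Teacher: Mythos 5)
Your proof is correct and is exactly the formal verification the paper has in mind: the proposition is stated with its proof omitted (the \qed follows the statement immediately), the point being that every object in the conjecture --- the good model, $E(O_K)$, $\Lie_E$, $W_E$, $\exp_E$, the Euler factors, and the top exterior power --- decomposes additively (resp.\ multiplicatively for $L$) along $M_1\oplus M_2$, so that $Z:=Z_1\oplus Z_2$ works. You have correctly identified and checked the only point needing care, namely that the integral structures split via the uniqueness clause of Theorem~\ref{goodred}.
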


\section{Numerical experiments}

Given a $ t $-motive $ M $ and an $ n $ such that
the Euler product defining $ L( M, n ) $ converges
one can numerically approximate
\[
	L( M, n ) \in \F_q(( t^{-1} ))
\]
simply by multiplying all Euler factors at
places of degree $ \leq d $.
The proof of Proposition \ref{convergence} yields hard
error estimates for this approximation. This bound is linear
in $ d $ and hence this algorithm will compute
$ L( M, n ) $ modulo $ t^{-X} \F_q[[ t^{-1} ]] $ in a running
time that is exponential in $ X $.

Since the conjecture does not predict the module 
$ Z $, or does not even give bounds on the ``height'' of
generators of $ Z $, it does not lend itself to numerical
falsification. 
Yet we have systematically found that when working with $ M $ 
of low (naive) height there is always a $ Z $ of low (naive) height
for which the conjecture holds numerically to relatively high
precision. 

Before we state some of these numerical examples we introduce
the \emph{logarithm} of an abelian $ t $-module, which we will
need to produce candidate modules $ Z $ in some of these examples.

\subsection{The logarithm of an abelian $ t $-module}

Let $ E = ( \G_a^d, \phi ) $ be an Abelian $ t $-module over
$ K_\infty $. If we identify
$ \Lie_E( \C_\infty ) $ and $ E( \C_\infty ) $ with $ \C_\infty^d $ in
the obvious way then $ \exp_E : \Lie_E( \C_\infty ) \to E( \C_\infty ) $
can be expressed as a power series
\[
	\exp_E = \sum_{i=0}^{\infty} e_i \tau^i
\]
with $ e_i \in M_d( K_\infty ) $ and $ e_0 = 1 $. We claim that there
is a unique power series
\[
	\log_E = \sum_{i=0}^{\infty} l_i \tau^i
\]
with $ l_i \in M_d( K_\infty ) $ and $ l_0 = 1 $ such that
\begin{equation}\label{explog}
	\exp_E \log_E = 1.
\end{equation}
Indeed, if $ n > 0 $ then comparing  coefficients of $ \tau^n $
in (\ref{explog}) yields
\[
	l_n + e_1 \tau( l_{n-1} ) + \cdots + e_n \tau^n( l_0 ),
\]
where $ \tau( b ) $ is the matrix obtained from $ b $ by raising
every entry to the $ q $-th power. This last expression gives a recursion
for the $ l_i $ that shows that there is a unique power series $ \log_E $
satisfying (\ref{explog}).

Given an $ x \in E( K_\infty ) $ it is not necessarily true that
the infinite sum $ \log_E( x ) $ converges, but when it does
converge then clearly $ \exp_E( \log_E( x ) ) = x $. 

\subsection{$ L( E, 0 ) $ with $ E $ a Drinfeld module}

If $ E= ( E, \varphi ) $ is a Drinfeld module then
$ W_E= \Lie_E $ and hence one-dimensional. So if $ E $ 
has everywhere good reduction the conjecture predicts that
\begin{equation}\label{logalg}
	\exp_E ( L( E, 0 ) e ) \in E( O_K )
\end{equation}
where $ e \in \Lie_E( K_\infty ) $ is a generator defined over
$ O_K $.

If $ E $ has rank $ 1 $ over $ K = \F_q( \th ) $ and has
everywhere good reduction then it is necessarily of the form
\[
	E = ( \G_a, t \mapsto \th + \alpha\tau )
\]
with $ \alpha \in \F_q^\times $. We have
\[
	L( E, 0 ) = \sum_{f\in A_+} \frac{ \alpha^{\deg(f)} }{ f }
\]
and for these (\ref{logalg}) is known. (If $ \alpha = 1 $
this is Theorem \ref{carlitzpower} with $ n = 1 $. For other
values of $ \alpha $ one reduces to this case by a change of variable
$ t' := \alpha^{-1} t $.)

If the rank of $ E $ is higher than one and if $ E $ does not have
CM then the methods of the proof break down completely since there is
no explicit description of $ L( E, 0 ) $ as an infinite sum,
only as an Euler product.

However, $ L( E, 0 ) $ can be approximated numerically.

\begin{example}
Let $ q=2 $ and $ E = ( \G_a, t \mapsto \th + \tau + \tau^2 ) $
over $ K = \F_2( \th ) $. This Drinfeld module does not have
complex multiplication over $ K^\sep $.
We have
\begin{eqnarray*}
	L( E, 0 ) &\in&
	1 + t^{-2} + t^{-3} + t^{-5} + t^{-7} + t^{-9} + \\
	&& t^{-10} + t^{-17} + t^{-18} + t^{-19} \F_2[[ t^{-1} ]]
	\subset F_\infty.
\end{eqnarray*}
If we identify $ E( K ) = \G_a( K ) = K $ then one verifies that
$ E( O_K ) = O_K $. Using the natural generator
$ e \in \Lie_E( K_\infty ) $ we compute
\[
	\exp_E ( L( E, 0 )\, e ) \in
	1 + \th^{-19} \F_2[[ \th^{-1} ]] \subset K_\infty,
\]
so $ \exp_E ( L( E, 0 )\, e ) $ is at least very close to
an element of $ E( O_K ) $.
\end{example}

Similarly but now $ q = 3 $ and
$ E = ( \G_a, t \mapsto \th + \th\tau - \tau^2 ) $. We find that
\[
	\exp_E ( L( E, 0 )\, e ) \in
	1 + \th^{-12} \F_3[[ \th^{-1} ]].
\]

We have computed hundreds of such examples over $ \F_2( \th ) $,
$ \F_3( \th ) $ and $ \F_5( \th ) $ (but to a slightly lower precision than
the examples above), and in all of them $\exp_E( L( E, 0 ) e )$
coincided with a polynomial in $ \th $ (not always the constant
polynomial $ 1 $), within the computed precision.

\smallskip
Finally a rank $ 3 $ example:

\begin{example}
Take $ q = 2 $ and $ E = ( \G_a, t \mapsto \th + \tau + \tau^3 ) $. 
Then
\[
	\exp_E( L( E, 0 )\, e ) \in
	1 + \th^{-12} \F_2[[ \th^{-1} ]].
\]
\end{example}

\subsection{$ L( M, 2 ) $ with $ M $ the $ t $-motive of a
	Drinfeld module of rank $ 2 $}

Let $ E $ be a Drinfeld module of rank $ 2 $ and $ M = M( E ) $.
We have that $ M^\vee \cong M \otimes \det(M)^\vee $, so if we put
\[
	\tilde{ M } := M \otimes C^{\otimes 2} \otimes \det(M)^\vee
\]
then
\[
	L( M, 2 ) = L( \tilde{ M }^\vee, 0 ).
\]
Let $ \tilde{ E } $ be the $ t $-module corresponding to
$ \tilde{ M } $. Then $ \tilde{ E } $ has dimension $ 3 $ and the
maximal quotient $ w : \Lie_{\tilde{E}} \to W_{\tilde{E}} $ on which
$ t - \th $ acts trivially is two-dimensional. From the conjecture we
should therefore expect to express
$ L( M, 2 ) = L( \tilde{E}, 0 ) $ as a two by two determinant. Here is
an explicit example:

\begin{example}
Let $ q = 2 $ and $ E = ( \G_a, t \mapsto \th + \tau + \tau^2 ) $.
Then there is an $ O_K[ t ] $-basis for $ \M $ on which $ \sigma $
is expressed as
\[
	\left(
	\begin{array}{cc}
		1	& \th+t	\\
		1	& 0
	\end{array}
	\right).
\]
Note that $ \det( M ) = C $, so the action of $ \sigma $ on
the obvious basis for $ \tilde{ M } = M \otimes C $ is given by
\[
        \left(
        \begin{array}{cc}
                \th+t    & \th^2+t^2  \\
                \th+t    & 0
        \end{array}
        \right).
\]
From this the corresponding $ t $-module $ \tilde{ E } $ can be computed.
It is given by $ \tilde{ E } = ( \G_a^3, \varphi ) $, where $ \varphi $ is
determined by
\[
	\varphi(t) \left( \begin{array}{c}
	x_1 \\ x_2 \\ x_3
	\end{array} \right)
	=
	\left( \begin{array}{c}
	x_3 \\
	\th x_1 + \th x_2 + x_3 + \tau( x_1 ) \\
	\th^2 x_1 + \tau( x_2 )
	\end{array} \right).
\]
The quotient $ w : \Lie_{\tilde{F}} \to W_{\tilde{F}} $ takes the explicit
form
\[
	w \left( \begin{array}{c}
	\xi_1 \\ \xi_2 \\ \xi_3
	\end{array} \right)
	=
	\left( \begin{array}{c}
	\xi_1 + \xi_2 \\
	\th \xi_1 + \xi_3
	\end{array} \right)
\]
Now let $ z_1 = ( 1, 0, 0 ) $ and $ z_2 = ( 0, 0, 1 ) $ in
$ \tilde{E} ( O_K ) $. Then $ \log_{\tilde{E}} ( z_1 ) $ and
$ \log_{\tilde{E}} ( z_1 ) $ are well-defined elements of
$ \Lie_{\tilde{E}} ( K_\infty ) $ (the defining infinite sums
converge) and the ratio of the determinant
\[
	w( \log_{\tilde{E}} ( z_1 ) ) \wedge w( \log_{\tilde{E}} ( z_2 ) )
	\in \wedge^2 \Lie_E( K_\infty )
\]
with
\[
	L( \tilde{E}, 0 ) ( (1,0) \wedge (0,1) )
\]
is computed to lie in $ 1 + \th^{-31} \F_2[[ \th^{-1} ]] $.
So the conjecture seems to hold with $ Z $ the module generated by
$ \log_{\tilde{E}} ( z_1 ) $ and $ \log_{\tilde{E}} ( z_2 ) $.
\end{example}

\subsection{$ L( ( Sym^2 M )^\vee, 0 ) $ with $ M $ the $ t $-motive
	of a rank $ 2 $ Drinfeld module}

Let $ M $ be the $ t $-motive of a rank $ 2 $ Drinfeld module. Then
$ \Sym^2 M $ is the $ t $-motive of a rank $ 3 $ and dimension $ 3 $
$ t $-module $ E $. The quotient $ \Lie_E / ( t - \th ) \Lie_E $
is two-dimensional. 

\begin{example}
Let $ q = 3 $ and $ M $ the $ t $-motive of the Drinfeld module
$ ( \G_a, t \mapsto \th - \tau + \tau^2 ) $ over $ \F_3( \th ) $.
The action of $ \sigma $ on a suitable basis of $ \Sym^2 M $ is given by
\[
	\left(
	\begin{array}{ccc}
		1  &  t - \th  &  t^2 + \th t + \th^2 \\
		1  &  \th - t  &  0 \\
		1  &  0  &  0
	\end{array}
	\right).
\]
and the corresponding $ t $-module is
$ E = E_{\Sym^2 M} = ( \G_a^3, \varphi ) $, where
$ \varphi $ is given by
\[
	\varphi(t) \left( \begin{array}{c}
		x_1 \\ x_2 \\ x_3
	\end{array} \right)
	=
	\left( \begin{array}{c}
        	\th x_1 - x_1^q - x_3^q \\
		- \th x_1 - \th^2 x_3 - x_3^q + x_3^{ q^2 } \\
		x_1 + x_2 - \th x_3 
	\end{array} \right).
\]
The quotient $ w_E : \Lie_E \to W_E = \Lie_E / ( t - \th ) \Lie_E $ is
\[
	w \left( \begin{array}{c}
		\xi_1 \\ \xi_2 \\ \xi_3
	\end{array} \right)
	=
	\left( \begin{array}{c}
		\xi_1 \\ \xi_2 + \th \xi_3
	\end{array} \right).
\]
and we find that with $ Z $ the module generated by
$ \log_E ( 1, 0, 0 ) $ and $ \log_E ( 0, 1, 0 ) $ the
conjecture is compatible with the computed approximation
\begin{eqnarray*}
	L( E, 0 ) &\in& 
	1 + t^{-3} + t^{-5} + t^{-6} + t^{-7} - t^{-8} + t^{-11}
	- t^{-12} + t^{-13} \\
	&& - t^{-15} + t^{-16} - t^{-17} - t^{-18}
	+ t^{-19} + t^{-20} \F_2[[ t^{-1} ]].
\end{eqnarray*}
\end{example}

\section{A challenge}

Let $ f \in A $ be irreducible and
$ \chi : ( A / f )^\times \to \bar{\F_q}^\times $ be a group
homomorphism. Extend $ \chi $ to a multiplicative map
$ A \to \bar{\F_q} $ in the obvious way. Anderson
\cite{Anderson96} has given an expression for
\[
	L( \chi, 1 ) := \sum_{ f \in A_+ } \frac{ \chi(f) }{ f }
	\in \bar{ \F_q } (( 1/t ))
\]
in terms of Carlitz logarithms. So one can certainly say
something about some special values related to $ t $-motives
with bad reduction.

Yet here is a challenge: let $ E $ be the Drinfeld module
$ ( \G_a, t \mapsto \th + \th^{-1} \tau + \tau^2 ) $
over $ \F_2( \th ) $. Let $ v $ be the place $ \th = 0 $ of
bad reduction. Find an expression for
\begin{eqnarray*}
	L_{\{v,\infty\}}( E, 0 ) &\in& 
	1 + t^{-7} + t^{-9} + t^{-10} + t^{-11} + t^{-13} + \\
	&& t^{-14} + t^{-15} + t^{-17} + t^{-18} + 
	t^{-19} \F_2[[ t^{-1} ]].
\end{eqnarray*}

\bibliographystyle{plain}
\bibliography{../../master}

\end{document}